\documentclass{amsart}

\usepackage{xcolor} 
\usepackage{verbatim}
\usepackage{amsfonts}
\usepackage{amsmath}
\usepackage{amssymb}
\usepackage{amsthm}
\usepackage{enumerate}
\usepackage[utf8]{inputenc}
\usepackage{mathabx,epsfig}
\usepackage[bookmarks=false]{hyperref}
\usepackage{paralist, tabularx}
\usepackage[cmtip,arrow]{xy}
\usepackage{pb-diagram,pb-xy}

\newcommand{\TypeZero}{\mathrm{Type}(\emptyset)}
\newcommand{\TypeOm}{\mathrm{Type}(\omega)}
\newcommand{\TypemO}{\mathrm{Type}(\omega^*)}
\newcommand{\TypeOmO}{\mathrm{Type}(\omega +\omega^*)}

\newcommand{\N}{\mathbb{N}}
\newcommand{\M}{\mathbb{M}}
\newcommand{\LM}{\mathbb{L}}
\DeclareMathOperator{\Up}{Up}
\DeclareMathOperator{\Low}{Low}
\DeclareMathOperator{\Inc}{Inc}
\DeclareMathOperator{\im}{Im}
\DeclareMathOperator{\Lev}{Lev}

\DeclareMathOperator{\suc}{succ}
\DeclareMathOperator{\Fin}{Fin}

\newtheorem{theorem}{Theorem}
\numberwithin{theorem}{section}
\newtheorem{lemma}[theorem]{Lemma}

\newtheorem{fact}[theorem]{Fact}
\newtheorem{proposition}[theorem]{Proposition}

\newtheorem{question}[theorem]{Question}
\newtheorem{corollary}[theorem]{Corollary}
\newtheorem{definition}[theorem]{Definition}

\theoremstyle{definition}

\newtheorem{remark}[theorem]{Remark}

\title{On canonicity of almost linear minimal orders}

\author{Grzegorz Jagiella}

\address{Instytut Matematyczny Uniwersytetu Wrocławskiego, pl. Grunwaldzki 2, 50-384 Wrocław, Poland}
\address{ORCID: \href{http://orcid.org/0000-0002-5504-5260}{0000-0002-5504-5260}}

\email{grzegorz.jagiella@math.uni.wroc.pl}

\date{\today}
\keywords{minimal structures, definable orders, definable preorders, almost linear orders}

\subjclass[2020]{03C64 (Primary), 03C40, 06A06 (Secondary)}

\begin{document}

	\begin{abstract}
	We prove that if a minimal ordered structure $(M, <, \ldots)$ with infinite chains in an arbitrary language extending the language of strict orders interprets (in some power $M^n$) the linear order $(\omega, <)$, then this fact can be witnessed for $n=1$ via the incomparability relation of an $<$-definable strict order $R$; that $R$ is unique up to ``almost equality'', i.e., finite rearrangements of elements of $M$; and that $R$ can be defined from $<$ in a constructive way. We also show that several variants of the question whether a minimal ordered structure interprets an infinite linear order are all equivalent.
	\end{abstract}
	\maketitle
\section*{Introduction}

The motivation for this work is a question by Tanović \cite{Tan07} whether every minimal ordered structure with arbitrarily long chains interprets an infinite linear order. That question was in turn motivated by a conjecture of Pillay about the number of elementary extensions of an arbitrary countable first-order structure, up to isomorphism. Also, a specialized variant of the question for the so-called ``$\TypeOm$ structures'' appeared in \cite{KruTanWag13}, where the authors proved that the positive answer would fully resolve the remaining zero characteristic case of an old conjecture by Podewski \cite{Pod73}, which states that every minimal field is algebraically closed. Analysis of interpretability of linear orders in minimal structures was also used to partially resolve Kueker's Conjecture \cite{Tan12}.

In this paper, we do not settle the main question. We build upon some older work to show a number of results about a minimal ordered structure $M = (M, <, \ldots)$ (where $<$ is some distinguished partial order with infinite chains) which we assume interprets $(\omega, <)$. We show that the interpreting strict order is essentially unique up to so-called ``almost equality'', a notion made precise by Definition \ref{def:almost_equal}. We show that this order is moreover definable from $<$ alone. The proof gives an explicit iteration of definable relations and a finite-stabilization criterion for the existence of such an interpretation.

The original problem was stated somewhat ambiguously, as the meaning of ``interpretation'' was confined to dimension $1$: only considering the quotients of the first power of $M$ by some definable equivalence relation. Moreover, the work required for Podewski and Kueker conjectures only required a specialized variant of the question, restricting ordered structures to so-called $\TypeOm$ structures. We show that several natural variants of the question are all equivalent.

The paper is organized as follows. Section 1 contains general preliminaries. Section 2 establishes additional notation and recalls prior results by Tanović. Section 3 discusses interpretability in dimension 1 for $\TypeOm$ structures, the main results being Theorems \ref{thm:characterization} and \ref{thm:characterization2}. Section 4 is a short example of definability of orders in $(\omega, <)$. Section 5 proves a transfer result in the form of Proposition \ref{prop:interpretation_M_LM} and equivalences of natural variants of the main question in the form of Theorem \ref{thm:question_variants}.

\subsection*{AI statement} An earlier version of this manuscript and \cite{Jag14} were autoformalized to Lean 4 using Aristotle (\url{https://aristotle.harmonic.fun/}), which gave a suggestion to replace the ordering used in \cite[Proposition 13]{Jag14} with a colexicographical order. This resulted in a generalization stated as Proposition \ref{prop:n_to_one}. GPT-6 was used to correct minor mistakes in the final version of the manuscript.

\section{Preliminaries}

\subsection{Model theory}
We mostly use standard model-theoretic notations and conventions. Unless explicitly stated, we do not fix the language of the structures we consider. A definable set always means definable with parameters. Except for Section \ref{sec:n_to_one}, we set up the following conventions for the paper. First, we use a narrower definition of \emph{interpretation}: we say that a structure $M$ interprets another structure $N$ if there is a definable equivalence relation $E$ on $M$ such that the induced definable structure on $M/E$ is isomorphic to (some expansion of) $N$. This would be more properly called \emph{interpretation in dimension 1} (with domain of interpretation equal to all of $M$). Second, we usually do not make a typographic distinction between a structure and its universe.

\subsection{Notation and conventions for (pre)orders}

In the paper, we consider structures equipped with some number of orders and preorders. In this subsection we make clear some order-theoretic notations and conventions for such relations, most of which are textbook.

Except for Section \ref{sec:omega}, we always consider \underline{strict} partial orders: irreflexive, transitive, and asymmetric. In contrast, a \emph{preorder} always means a relation that is reflexive and transitive. Except in the preliminaries, we will keep these notions typographically distinct throughout the paper. We usually consider a structure $M$ equipped with a distinguished definable ``ambient'' strict order denoted $<$. Further definable strict orders on $M$ will be denoted by the symbols $R, S$, and preorders with $P, Q$, possibly with additional indicies or markings.

For a preorder $P$, define the $P$-\emph{bicomparability} relation $\sim_P$ by
\[x \sim_P y \iff P(x,y) \land P(y,x).\]
$P$-\emph{bicomparability} is an equivalence relation. Further define the \emph{strict part} of $P$, denoted $<_P$, by
\[x <_P y \iff P(x,y) \land \lnot P(y,x),\]
(equivalently $x <_P y \iff P(x,y) \land \lnot x \sim_P y$). Then $<_P$ is a strict partial order (induced by $P$).

For either a preorder or a strict order $R$, denote by $\parallel_R$ the $R$-incomparability relation, i.e.
\[x \parallel_R y \iff \lnot\left(R(x,y) \lor R(y,x)\right).\]
Note that for strict orders $x \parallel_R x$ for each $x$.

For $a \in M$, define its \emph{upper cone}, \emph{lower cone}, and \emph{incomparable set}:
\begin{align*}
\Up_R(a) & :=\{x \in M : R(a,x)\},\\
\Low_R(a) & :=\{x \in M : R(x,a)\},\\
\Inc_R(a) & :=\{x \in M : x \parallel_R a\}.
\end{align*}
All of these relations and sets are definable from their respective (pre)orders.

\subsection{Interpreting orders}

The central questions considered in this paper concern the interpretation of infinite linear orders in (minimal) ordered structures. Suppose that $M$ interprets a strict linear order $(A,<')$ via a definable equivalence relation $E$. Then the pullback $R$ of $<'$ by the quotient map is a definable strict partial order on $M$ such that $\parallel_{R} = E$. Then $A$ is infinite if and only if $R$ has an infinite chain.

Conversely, suppose that $R$ is a definable strict order on $M$ such that $\parallel_{R}$ is an equivalence relation. Then $R$ induces a strict linear order on $M/\parallel_{R}$ and the quotient map is an epimorphism of strict orders (again, $R$ has infinite chains if and only if $M/\parallel_{R}$ is infinite).

\begin{definition}
\label{def:order_interprets}
We say that a definable strict order $R$ on $M$ \emph{interprets} a linear order $(A, <)$ if $\parallel_{R}$ is an equivalence relation on $M$ and
\[(M/\parallel_{R}, R/\parallel_{R}) \approx (A, <).\]
\end{definition}
\begin{remark}
\label{rem:almost_linear_antichains}
\begin{enumerate}[(i)]
\item $R$ interprets a linear order if and only if for each $a \in M$ the set $\Inc_R(a)$ is an antichain.
\item $M$ admitting an order that interprets an infinite linear order is called \emph{almost linear} in \cite{KruTanWag13}.
\end{enumerate}
\end{remark}

\section{Minimal ordered structures}

Recall that a structure $M$ is \emph{minimal} if every definable subset of $M$ is either finite or cofinite. In particular, an infinite definable subset of $M$ is cofinite. We consider $M = (M, <, \ldots)$, an infinite minimal structure (in some language) with a distinguished definable strict order $<$.
When studying definable orders and preorders on $M$, we want to make some natural identifications: we will not distinguish between two orders if one can be obtained from another by some finite rearrangement of elements. More precisely:
\begin{definition}
\label{def:almost_equal}
We say that two binary relations $R$ and $S$ on $M$ are \emph{almost equal} if there is a cofinite subset $C \subseteq M$ such that $R$ and $S$ agree on $C$: that is, $R(a, b) \iff S(a, b)$ for all $a, b \in C$.
\end{definition}

Almost equality is an equivalence relation. Since $M$ is minimal, sets of the form $F(a, M)$ and $F(M, a)$ for a definable relation $F$ and $a \in M$ are either finite or cofinite. Hence whenever definable relations $R$ and $S$ are almost equal, their symmetric difference is definable in the language of equality. In particular, $R$ and $S$ are interdefinable.

\begin{remark}
By minimality, $M$ has the unique non-algebraic type $p$ over $M$. Suppose $\bar M \succ M$ is a monster model. Then $R$ and $S$ are almost equal if and only if they agree on $p(\bar M)^2$.
\end{remark}

We now recall a number of results by Tanović \cite{Tan05, Tan07} that explain the fundamental structure of $M$, adapted to our notation.

\begin{fact}
Let $(M, <)$ be minimal.
\begin{enumerate}[(i)]
\item $M$ has an infinite chain if and only if $M$ has finite chains of arbitrary length.
\item \cite[Proposition 1.1]{Tan05} If $M$ has an infinite chain, then $M$ is countable.
\end{enumerate}
\end{fact}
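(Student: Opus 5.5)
For (i), the forward direction is trivial, so the content is that arbitrarily long finite chains force an infinite one. Minimality says that for every $a$ each of $\Up_<(a)$, $\Low_<(a)$, $\Inc_<(a)$ is finite or cofinite; since they partition $M\setminus\{a\}$, at least one is cofinite. I will split into cases by how often the upper and lower cones are infinite.

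\textbf{Case 1: infinitely many $a$ have $\Up_<(a)$ cofinite.} I build an infinite chain greedily. Take such an $a_0$. Given $a_i$ with $\Up_<(a_i)$ cofinite, the infinite set of elements with cofinite upper cone meets the cofinite set $\Up_<(a_i)$. So I can choose $a_{i+1}>a_i$ of the same kind. The sequence $a_0<a_1<\dots$ is then an infinite chain. The symmetric case, with infinitely many cofinite lower cones, is handled in the same way. No definability of the set of such $a$ is needed, only that it is infinite.

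\textbf{Case 2: only finitely many $a$ have an infinite upper or lower cone.} Removing this finite set $F$ costs at most $|F|$ in chain length, so on $M\setminus F$ chains are still arbitrarily long. I then use the definable ``bounded height'' sets
\[H_j=\{a : \text{every $<$-chain with top element $a$ has at most $j$ elements}\}.\]
If some $H_j$ were cofinite, then a chain of length $n$ has at most $j$ members in $H_j$, so $n\le j+|M\setminus H_j|$, contradicting arbitrarily long chains. Hence each $H_j$ is finite. On the other hand, every element of $M\setminus F$ has finite lower cone, so it lies in some $H_j$. The dual statement holds for depth measured through upper cones. These facts alone are consistent with a countable $M$, since $M\setminus F=\bigcup_j H_j$ is a countable union of finite sets. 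So I still have to exploit minimality to rule out this configuration.

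The main obstacle is finishing Case 2. The model situation is a disjoint union of chains of lengths $1,2,3,\dots$, where all cones are finite and no infinite chain exists. I would show that any such configuration has a definable infinite coinfinite set, against minimality. In the model example the set of minimal elements works. In general I would try sets of the form
\[\{a : |\Low_<(a)|\text{ and }|\Up_<(a)|\text{ satisfy some fixed first-order comparison}\},\]
for instance ``$a$ is minimal in $\Low_<(b)\cup\{b\}$ for some $b$ with $a\in H_1$''. Each $H_j$ is finite, but the union is infinite, and I would look for a single formula splitting $M$ infinitely both ways. If no uniform formula is found, I would fall back on Tanović's structural analysis \cite{Tan05,Tan07}, as the statement is in any case attributed there.

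For (ii) I would simply invoke \cite[Proposition 1.1]{Tan05}. The idea behind it is that with an infinite chain, the greedy argument of Case 1 applies at every element. Then each element is pinned down within $M$ by finitely many finite-cofinite data relative to a fixed countable chain, giving only countably many elements.
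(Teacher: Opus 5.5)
The paper records this as a Fact with no proof; it only cites Tanovi\'c for (ii). So your argument has to stand on its own, and for (i) it does not yet. Case 1 is fine, but Case 2 ends in a search for ``some formula'' and a fallback to the literature, so the implication ``arbitrarily long chains $\Rightarrow$ infinite chain'' is not established. This is a genuine gap, though the missing step is short and you already have both ingredients: you never combined the finiteness of $H_1$ with the finiteness of \emph{upper} cones.

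Put $M':=M\setminus F$, which is definable from the finitely many parameters in $F$. Let $\mathrm{Min}'$ be the set of $<$-minimal elements of $M'$, i.e.\ your $H_1$ relativized to $M'$. The relativization matters, since a minimal element of $M$ may lie in $F$ and have a cofinite upper cone. Your height argument shows $\mathrm{Min}'$ is finite. Now take $a\in M'$. The set $\Low_<(a)\cap M'$ is finite, so either $a\in\mathrm{Min}'$, or a $<$-minimal element $m$ of $\Low_<(a)\cap M'$ lies in $\mathrm{Min}'$ and satisfies $m<a$. Hence
\[M'\subseteq\mathrm{Min}'\cup\bigcup_{m\in\mathrm{Min}'}\Up_<(m).\]
This is a finite union of finite sets, because each $\Up_<(m)$ is finite as $m\notin F$. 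That contradicts $M$ being infinite. So the set of minimal elements, which you noticed works in the model example, works in general: Case 2 forces bounded chains.

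For (ii), citing Tanovi\'c is exactly what the paper does, so that is acceptable. Your accompanying heuristic, however, is not an argument: the greedy step of Case 1 does not apply at every element, e.g.\ not at points of $I(M)$ or $U(M)$. A self-contained proof is short once (i) is complete.

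An infinite chain rules out Case 2, so say $L(M)$ is infinite (otherwise reverse the order). Pick $a_0<a_1<\cdots$ in $L(M)$ as in Case 1 and let $X:=\bigcap_i\Up_<(a_i)$. Then $M\setminus X$ is a countable union of finite sets. Also $X\subseteq U(M)$, since each $x\in X$ has an infinite, hence cofinite, lower cone.

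Suppose $X$ were uncountable. Since each $\Low_<(b)$ with $b\in X$ is cofinite, you can choose $b_0>b_1>\cdots$ inside $X$. Then $Y:=\bigcap_i\Low_<(b_i)$ has countable complement and satisfies $Y\subseteq L(M)$. As $L(M)\cap U(M)=\emptyset$, this forces $X\subseteq M\setminus Y$, so $X$ is countable, a contradiction. Hence $M=X\cup(M\setminus X)$ is countable.
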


For each $a \in M$, the definable sets $\Up_<(a), \Low_<(a)$ and $\Inc_<(a)$ partition $M$, hence exactly one of them is cofinite. Now define the ``lower'', ``upper'' and ``incomparable'' parts of $M$:
\begin{align*}
L_<(M) & := \{a \in M : \Up_<(a) \text{ is cofinite}\},\\
U_<(M) & := \{a \in M : \Low_<(a) \text{ is cofinite}\},\\
I_<(M) & := \{a \in M : \Inc_<(a) \text{ is cofinite}\}.
\end{align*}
(As before, if the order is fixed, we will drop the subscript in the notation above.) $M$ is the disjoint union of the parts $L_<(M), U_<(M)$, and $I_<(M)$. These sets need not be definable. Tanović shows that if $I_<(M)$ is infinite, then it is cofinite and we have the following classification of minimal ordered structures by their so-called ``type'':
\begin{enumerate}
\item $\TypeZero$ structure: $M = I(M) \cup \text{finite set}$.
\item $\TypeOm$ structure: $M = L(M) \cup \text{finite set}$.
\item $\TypemO$ structure: $M = U(M) \cup \text{finite set}$.
\item $\TypeOmO$ structure: $L(M)$ and $U(M)$ both infinite, $I(M)$ finite.
\end{enumerate}
A $\TypeZero$ structure is an antichain (except for a finite set). The archetypical examples for $\TypeOm$, $\TypemO$, $\TypeOmO$ orders are respectively: $(\omega, <)$; $(\omega^*, <)$ ($\omega$ with reversed order); and $(\omega + \omega^*, <)$ ($\omega$ concatenated with $\omega^*$). In fact, these three examples are unique ``normalized'' (defined below) linear orders within their respective classes.

By replacing the original order with an almost equal one, we can move any finite number of elements between the three parts. For instance, we can take any $a \in M$ and define $<'$ by declaring $a$ as its smallest element and letting $<'$ agree with $<$ on $M \setminus \{a\}$. Then we have $a \in L_{<'}(M)$, while outside of $\{a\}$, the three parts $L_<(M), U_<(M)$, and $I_<(M)$ agree with $L_{<'}(M), U_{<'}(M)$, and $I_{<'}(M)$ respectively. In particular, we can adjust the original order to force each of $L_<(M), U_<(M), I_<(M)$ to be either infinite or empty. These adjustments do not change the type of the structure. Moreover, results about interpretability and definability of orders remain valid. When working with a given order, it is often convenient to replace it with one where each of the three parts is either infinite or empty. We will do this in a standardized way:
\begin{definition}
Let $<$ be a definable strict order on $M$. The \emph{normalization} of $<$ is a definable strict order $<'$ obtained in the following way:
\begin{compactenum}[(i)]
\item If $(M, <)$ is $\TypeOm$, rearrange $I_<(M) \cup U_<(M)$ into an antichain and prepend it to $L_<(M)$. Then $L_{<'}(M) = M$.
\item If $(M, <)$ is $\TypemO$, rearrange $I_<(M) \cup L_<(M)$ into an antichain and append it to $U_<(M)$. Then $U_{<'}(M) = M$.
\item If $(M, <)$ is $\TypeOmO$, rearrange $I_<(M)$ into an antichain and prepend it before $L_<(M) \cup U_<(M)$. Then $I_{<'}(M) = \emptyset, L_{<'}(M) = L_{<}(M) \cup I_{<}(M), U_{<'}(M) = U_{<}(M)$.
\item (pro forma) If $(M, <)$ is $\TypeZero$, let $<'$ be empty.
\end{compactenum}
We will say that an order $<$ is \emph{normalized} if it is its own normalization.
\end{definition}
An order is almost equal to its normalization. Normalized linear orders can be characterized by the types of infinite chains that appear within them. This characterization can also precisely distinguish the type of a minimal ordered structure (and justify the terminology).
By \cite[Theorem 2]{Tan07} and subsequent discussion:
\begin{fact}
\label{fact:chains_type}
For a minimal $(M, <)$:
\begin{enumerate}[(i)]
\item $M$ contains an infinite ascending chain if and only if $M$ is either $\TypeOm$ or $\TypeOmO$.
\item $M$ contains an infinite descending chain if and only if $M$ is either $\TypemO$ or $\TypeOmO$.
\end{enumerate}
Moreover, a normalized $\TypeOm$ structure does not contain a chain of (order-theoretic) order type $\omega + 1$, and a normalized $\TypemO$ structure does not contain a chain of order type $1 + \omega^*$.
\end{fact}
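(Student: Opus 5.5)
The plan is to derive everything from the trichotomy $M = L(M)\sqcup U(M)\sqcup I(M)$ and the definition of the four types. The only input from minimality is that for each $a$ exactly one of $\Up(a)$, $\Low(a)$, $\Inc(a)$ is cofinite, so the other two are finite.

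\emph{Key observation.} If $a$ lies below an infinite ascending chain $a < c_1 < c_2 < \cdots$, then $\Up(a)$ is infinite. So $\Up(a)$ is cofinite, and hence $a \in L(M)$. In particular, every element of an infinite ascending chain $c_0 < c_1 < \cdots$ lies in $L(M)$, since each $c_i$ has the infinite ascending chain $(c_j)_{j>i}$ above it. So an infinite ascending chain forces $L(M)$ to be infinite. By the classification this happens exactly in the $\TypeOm$ and $\TypeOmO$ cases: in $\TypeZero$ and $\TypemO$ the set $L(M)$ is finite. This proves the forward direction of (i).

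\emph{Converse of (i).} Suppose $L(M)$ is infinite. Choose $a_0 \in L(M)$. Since $\Up(a_0)$ is cofinite and $L(M)$ is infinite, $\Up(a_0)\cap L(M)$ is infinite, so we can choose $a_1$ in it. Repeating this inductively gives $a_0 < a_1 < a_2 < \cdots$, an infinite ascending chain. Part (ii) is the same argument applied to the reversed order, which exchanges the roles of $L$ and $U$ and of $\TypeOm$ and $\TypemO$.

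\emph{The ``moreover'' clause.} If $<$ is a normalized $\TypeOm$ order, then $L(M)=M$, so $\Low(a)$ is finite for every $a$. A chain of order type $\omega+1$ has a top element $b$ lying above infinitely many chain elements, so $\Low(b)$ would be infinite. This is a contradiction. Dually, a normalized $\TypemO$ order has $U(M)=M$, so every $\Up(a)$ is finite, which excludes chains of type $1+\omega^*$.

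I expect no serious obstacle here. The one point needing care is the bookkeeping in the key observation: membership in $L(M)$ follows from $\Up(a)$ being infinite precisely because, by minimality, exactly one of the three definable sets $\Up(a)$, $\Low(a)$, $\Inc(a)$ is cofinite.
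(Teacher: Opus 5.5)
Your argument is correct, but it does not follow the paper's route. The paper gives no proof of this Fact and instead imports it from \cite[Theorem 2]{Tan07} and the discussion following it. You derive it directly from the trichotomy $M = L(M)\sqcup U(M)\sqcup I(M)$ and finiteness of cones. Every element of an infinite ascending chain has an infinite, hence cofinite, upper cone, so it lies in $L(M)$. An infinite $L(M)$ lets you build $a_0<a_1<\cdots$ inside $L(M)$. In a normalized $\TypeOm$ order every $\Low(a)$ is finite, which excludes a top element above an $\omega$-chain. This shows that the chain characterization is elementary once the four-type classification is available. What it does not give you is independence from Tanovi\'c. In the forward direction of (i), and dually (ii), you pass from ``$L(M)$ is infinite'' to ``$\TypeOm$ or $\TypeOmO$''. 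That step uses the \emph{exhaustiveness} of the classification, i.e.\ Tanovi\'c's nontrivial theorem that an infinite $I(M)$ is cofinite. Nothing in your own argument rules out $L(M)$ and $I(M)$ both being infinite. The paper recalls that classification before stating the Fact, so relying on it is legitimate, but that is where the real content of the statement lies. The converse directions and the ``moreover'' clause need no such input.
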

Note that a structure of $\TypemO$ is just a $\TypeOm$ structure with reversed order. Moreover, the type of a non-trivial minimal ordered structure does not depend on the choice of the non-trivial order, up to its direction:
\begin{fact}[{\cite[Corollary]{Tan07}}]
\label{fact:type_fixed}
Suppose that $R$ and $S$ are definable strict partial orders on $M$ with infinite chains.
\begin{enumerate}[(i)]
\item If $(M, R)$ is $\TypeOm$, then $(M, S)$ is either $\TypeOm$ or $\TypemO$.
\item If $(M, R)$ is $\TypeOmO$, then $(M, S)$ is $\TypeOmO$.
\end{enumerate}
\end{fact}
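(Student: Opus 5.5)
The plan is to argue by elimination among the four possible types of $(M,S)$, using the classification recalled above together with Tanović's dichotomy that $I_T(M)$ is either finite or cofinite for any definable strict order $T$.

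First, $(M,S)$ cannot be $\TypeZero$. Such a structure is an antichain apart from a finite set, so it has no infinite chain, whereas $S$ does. Second, reversing $S$ exchanges $\TypeOm$ with $\TypemO$ and fixes $\TypeOmO$. So the whole statement reduces to one claim: \emph{a minimal $M$ cannot carry definable strict orders $R$ and $S$ with $(M,R)$ of $\TypeOm$ and $(M,S)$ of $\TypeOmO$.}

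Given this claim, both parts follow.
\begin{compactenum}[(i)]
\item If $R$ is $\TypeOm$, then $S$ is neither $\TypeZero$ nor $\TypeOmO$, so it is $\TypeOm$ or $\TypemO$.
\item If $R$ is $\TypeOmO$, suppose $S$ were $\TypeOm$. This contradicts the claim with the roles of $R$ and $S$ swapped. If $S$ were $\TypemO$, apply the same argument to the reverse of $S$. Since $S$ is not $\TypeZero$ either, it must be $\TypeOmO$.
\end{compactenum}

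To prove the claim, replace $R$ by its normalization, which is almost equal to $R$, so that $L_R(M)=M$. Then for every $a$ the cone $\Up_R(a)$ is cofinite, and hence $\Low_R(a)$ is finite. Now consider the relation $T := R \cap S$. An intersection of strict orders is irreflexive and transitive, so $T$ is a definable strict order.

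We locate the elements of $L_S(M)$ and $U_S(M)$ in $T$ as follows.
\begin{compactenum}[(a)]
\item Take $a \in L_S(M)$. Both $\Up_R(a)$ and $\Up_S(a)$ are cofinite, so $\Up_T(a)$ is cofinite and $a \in L_T(M)$.
\item Take $a \in U_S(M)$. Then $\Up_T(a) \subseteq \Up_S(a)$, which is finite because $\Low_S(a)$ is cofinite. Also $\Low_T(a) \subseteq \Low_R(a)$, which is finite. Hence $\Inc_T(a)$ is cofinite and $a \in I_T(M)$.
\end{compactenum}
Since $S$ is $\TypeOmO$, both $L_S(M)$ and $U_S(M)$ are infinite. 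By (a), $L_T(M)$ is infinite. By (b), $I_T(M)$ is infinite, so by Tanović's dichotomy it is cofinite. Since $L_T(M)$ and $I_T(M)$ are disjoint, $L_T(M)$ must then be finite, a contradiction.

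The only delicate point is this last step: checking that the cone containments really give finiteness in both directions, and that the dichotomy for $I_T(M)$ applies to the auxiliary order $T$. It does, because $T$ is just another definable strict order on the minimal structure $M$. Normalizing $R$ beforehand removes the finitely many exceptional elements that would otherwise spoil the uniform finiteness of $\Low_R(a)$.
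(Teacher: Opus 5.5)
Your argument is correct. The paper itself gives no proof of this Fact: it quotes it from \cite{Tan07}, alongside the chain characterization of Fact~\ref{fact:chains_type} and the finite/cofinite dichotomy for $I_<(M)$ that underlies the four-type classification. So your proof is a genuinely different, self-contained derivation rather than a reconstruction of anything in the paper.

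\textbf{Your route.} You derive type invariance directly from the dichotomy, plus the description of $\TypeZero$ structures as antichains up to a finite set. The device is the auxiliary order $T=R\cap S$. Elements of $L_S(M)$ land in $L_T(M)$, and elements of $U_S(M)$ land in $I_T(M)$, which makes both of these infinite; the dichotomy forbids this.

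\textbf{The checks hold.} $T$ is a definable strict order, and $\Up_T(a)=\Up_R(a)\cap\Up_S(a)$. The containments $\Up_T(a)\subseteq\Up_S(a)$ and $\Low_T(a)\subseteq\Low_R(a)$ give exactly the finiteness you need. The dichotomy may be applied to $T$, because the paper states it for an arbitrary definable strict order. In any case $T$ has infinite ascending chains inside the infinite set $L_T(M)$. Your reductions of (i) and (ii) to the single claim, by exclusion and by reversing $S$, are also fine.

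\textbf{What this buys.} The Corollary becomes a short formal consequence of the basic $L/U/I$ analysis. It avoids the chain characterization entirely, and makes transparent why the type cannot switch between $\TypeOm$ and $\TypeOmO$. It does not reduce the dependence on Tanovi\'c, since the dichotomy and the $\TypeZero$ description you use are themselves his results, imported by the paper without proof.

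\textbf{Minor remark.} Normalizing $R$ is harmless but unnecessary. $\Up_R(a)$ is cofinite and $\Low_R(a)$ is finite for all but finitely many $a$. You only need these properties on infinitely many elements of $L_S(M)$ and of $U_S(M)$, which is still guaranteed.
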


A minimal ordered structure need not be linear. A ``classic'' example is $M = \{(i, n) : n \in \omega \land i < n\}$ with $(i, n) < (i', n') \iff n < n'$. Then $\Inc_<\left((i, n)\right) = \{(j, n) : j < n\}$. In this case, $<$ interprets $\omega$. There exist examples of minimal structures $(M, <)$ where $\parallel_{<}$ is not an equivalence relation \cite{KruTanWag13}. However, every known example that is not $\TypeZero$ still admits a definable order that interprets (an infinite) linear order. This leads to the following main question:
\begin{question}
Let $(M,<)$ be a minimal order with infinite chains. Does $M$ interpret an infinite linear order?
\end{question}
Note that any structure interpreted in a minimal structure (in dimension $1$) is again minimal. The question can be refined by the following observation:
\begin{fact}
Up to interdefinability, there are exactly two infinite minimal linear orders, namely $(\omega, <)$ and $(\omega + \omega^*, <)$.
\end{fact}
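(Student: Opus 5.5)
The plan is to show two things. First, every infinite minimal linear order $(A,<)$ is interdefinable with $(\omega,<)$ or with $(\omega+\omega^*,<)$. Second, these two are not interdefinable with each other. Here ``interdefinable'' means that the two structures live on a common universe (after a bijection) and have the same definable sets.

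\emph{Types and chains.} A linear order has $I_<(A)$ finite: for $a$ in $I(A)$, the set $\Inc_<(a)=\{a\}$ would have to be cofinite. So by the classification, $(A,<)$ is $\TypeOm$, $\TypemO$ or $\TypeOmO$. Replace $<$ by its normalization $<'$. Since $<'$ is almost equal to $<$, the two are interdefinable. The normalization of a linear order is linear after a trivial modification: the finitely many elements moved into an antichain can instead be placed in a finite linear block at the start or end. We must check that this keeps $<'$ definable and almost equal to $<$, which holds because only finitely many elements are rearranged.

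\emph{Case $\TypeOm$.} Every element has cofinitely many successors, so every initial segment $\Low(a)$ is finite. Hence $(A,<')$ is a linear order in which each element has finitely many predecessors, and $A$ is infinite. Such an order is isomorphic to $(\omega,<)$, via $a\mapsto|\Low(a)|$. The $\TypemO$ case is the reverse order $\omega^*$. It is interdefinable with $\omega$ because reversing the order is definable in both directions.

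\emph{Case $\TypeOmO$.} Here $L=L(A)$ and $U=U(A)$ are infinite and partition $A$. For $a\in L$, the set $\Up(a)$ is cofinite, so $\Low(a)$ is finite; symmetrically, for $b\in U$ the set $\Up(b)$ is finite. If $b\in U$ and $a\in L$ with $b<a$, then $\Low(a)$ contains $b$ together with all of $\Low(b)$, which is cofinite. This contradicts the finiteness of $\Low(a)$. Hence $L<U$ elementwise. Then $L$ is a linear order with finite initial segments, so $L\cong\omega$, and dually $U\cong\omega^*$. Therefore $(A,<')\cong\omega+\omega^*$.

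\emph{Non-interdefinability.} Both orders are indeed minimal; I would take this as known, or verify it by quantifier elimination in the language with the successor function and constants. Suppose they were interdefinable. Then the order of $\omega+\omega^*$ would be definable in $(\omega,<)$, and $\omega+\omega^*$ would be a definable linear order there. By Fact~\ref{fact:type_fixed}(i), every definable order with infinite chains on a $\TypeOm$ structure is $\TypeOm$ or $\TypemO$, never $\TypeOmO$. This is a contradiction.

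The main obstacle is the normalization step in the linear setting. The paper's normalization turns finitely many points into an antichain, and we need a definable linear variant of it. This is routine, since finitely many points and a finite permutation are definable with parameters.
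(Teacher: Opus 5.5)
Your proof is correct. The paper states this Fact as background and gives no proof, so there is no argument of the paper's to compare yours with. What you give is a sound, self-contained verification. Its second half pulls the order of $\omega+\omega^*$ back along a bijection and applies Fact~\ref{fact:type_fixed} in the $\TypeOm$ structure $(\omega,<)$; this is exactly the mechanism the paper uses in Lemma~\ref{lem:limits}.

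One simplification is worth noting. Your own observation gives $I_<(A)=\emptyset$, not merely finite, since $\Inc_<(a)=\{a\}$ is never cofinite in an infinite $A$. Then $L(A)$ is downward closed and $U(A)$ is upward closed, and your argument shows $L(A)<U(A)$ elementwise. So $(A,<)$ is already isomorphic to $\omega+k$, $k+\omega^*$ or $\omega+\omega^*$ for some finite $k$. Relocating the $k$ extra points to the other end is definable with parameters in both directions. Hence the step you flag as the main obstacle, a linear variant of normalization, is not needed at all.

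Finally, minimality of $(\omega+\omega^*,<)$ is genuinely part of the claim ``exactly two''. Taking it as known matches how the paper treats this statement.
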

This yields precisely Question 3 from \cite{Tan07}. We will mostly consider the case where $M$ is of $\TypeOm$ (and implicitly the $\TypemO$ case). As we will see in Section \ref{sec:n_to_one}, this case is equivalent to the original question, and even its more general variant (see Theorem \ref{thm:question_variants}).

We finish the section with some additional vocabulary:

\begin{definition}
\label{def:omega_interpreting}
For a definable strict order $R$ on $M$ that interprets $(\omega, <)$ (in the sense of Definition \ref{def:order_interprets}), we write $\Lev_R \colon M \to \omega$ for the quotient map and define $\Lev^{-1}(n)$ to be the \emph{$n$-th level of $M$} (with respect to $R$).
\end{definition}
The levels of $M$ are precisely the classes of $\parallel_{R}$ and (by minimality) they are all finite.

\section{Orders interpreting $(\omega, <)$}
\subsection{Uniqueness for $\TypeOm$ orders}

First we observe that the type of the structure limits the possibility of interpreted orders:
\begin{lemma}
\label{lem:limits}
Suppose that $(M, <)$ is a minimal ordered structure with infinite chains that interprets an infinite linear order. Then:
\begin{enumerate}[(i)]
\item $M$ interprets $(\omega, <)$ if and only if $(M, <)$ is either $\TypeOm$ or $\TypemO$.
\item $M$ interprets $(\omega + \omega^*, <)$ if and only if $(M, <)$ is $\TypeOmO$.
\end{enumerate}
\end{lemma}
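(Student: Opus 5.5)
The argument rests on the type of the interpreting order, and on the fact that the type does not depend on which definable order with infinite chains we use. By hypothesis, $M$ interprets an infinite linear order $(A,<')$. As recalled in the subsection on interpreting orders, this gives a definable strict order $R$ on $M$ with $\parallel_R$ an equivalence relation, $M/\parallel_R \approx (A,<')$, and $R$ having infinite chains. The quotient $A$ is interpreted in the minimal structure $M$, so it is itself minimal. By the Fact on minimal linear orders, $A$ is interdefinable with either $(\omega,<)$ or $(\omega+\omega^*,<)$.

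The first key step is to read off the type of $(M,R)$ from $A$, using Fact \ref{fact:chains_type}. Every chain in $M$ under $R$ projects injectively to a chain in $A$, since $R$-comparable elements lie in distinct $\parallel_R$-classes. Conversely, any chain in $A$ lifts to an $R$-chain by choosing class representatives. So $(M,R)$ has an infinite descending chain if and only if $A$ does, and likewise for ascending chains.
\begin{itemize}
\item If the induced order on $A$ is literally $(\omega,<)$, there are ascending chains but no descending ones, so $(M,R)$ is $\TypeOm$.
\item If it is $(\omega,<)$ reversed, i.e.\ $\omega^*$, then $(M,R)$ is $\TypemO$.
\item If it is $\omega+\omega^*$, there are both, so $(M,R)$ is $\TypeOmO$.
\end{itemize}
One subtlety is that ``interdefinable with $(\omega,<)$'' should be taken to mean that the induced order on $A$ is $\omega$ or $\omega^*$ up to almost equality. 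Finitely many rearranged points do not affect the existence of infinite monotone chains, so the chain criterion still applies.

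The second key step invokes Fact \ref{fact:type_fixed} to transfer the type from $R$ to $<$. If $(M,R)$ is $\TypeOm$ or $\TypemO$, then $(M,<)$ cannot be $\TypeOmO$: otherwise part (ii) applied with the roles reversed would force $(M,R)$ to be $\TypeOmO$. Hence $(M,<)$ is $\TypeOm$ or $\TypemO$. If $(M,R)$ is $\TypeOmO$, then $(M,<)$ is $\TypeOmO$, by the same fact applied with the roles swapped. Note that $(M,<)$ has infinite chains, so it is not $\TypeZero$.

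These two steps give the forward directions of (i) and (ii). For the converses, $M$ interprets some infinite linear order by assumption, and that order is one of $\omega$ or $\omega+\omega^*$. If $(M,<)$ is $\TypeOm$ or $\TypemO$, it cannot be $\omega+\omega^*$, since that would force $(M,<)$ to be $\TypeOmO$ by the forward direction of (ii). So it is $\omega$, and symmetrically in the other case.

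I expect the main obstacle to be the bookkeeping in the first step: making precise that interpretation up to interdefinability or isomorphism preserves the chain behaviour. This is handled by the projection-and-lifting argument together with the observation that almost-equal modifications preserve infinite monotone chains.
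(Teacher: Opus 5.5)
Your proposal is correct and follows the paper's proof: the chain behaviour of the quotient determines the type of $(M,R)$ via Fact \ref{fact:chains_type}, and Fact \ref{fact:type_fixed} transfers that type to $(M,<)$. You are more explicit than the paper about the converse directions, which rest on the dichotomy of minimal linear orders; there, your step ``so it is $\omega$'' tacitly uses that a quotient of type $\omega+n$ or $n+\omega^*$ becomes literally $\omega$ after reversing $R$ if necessary and moving the finitely many top classes to the bottom, which is a definable finite rearrangement.
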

\begin{proof}
Let $R$ be the definable strict order on $M$ that interprets a linear order $(A, <')$. Then $(A, <')$ abstractly embeds into $(M, R)$ and the existence of infinite increasing or decreasing chains in $(A, <')$ determines the type of $(M, R)$ (up to direction of order) by Fact \ref{fact:chains_type}. This type determines (up to direction) the type of $(M, <)$ by Fact \ref{fact:type_fixed}.
\end{proof}

Throughout the section we will assume that $(M, <)$ is a minimal structure of $\TypeOm$. Our first goal is to show that if $M$ admits a definable strict order $R$ that interprets $\omega$, then $R$ is unique up to almost equality.

\begin{lemma}
\label{lem:eq_antichain}
If $E$ is a definable equivalence relation on $M$ with infinitely many classes, then all classes are finite and almost all $E$-classes are antichains with respect to $<$.
\end{lemma}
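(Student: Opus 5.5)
First, every class is finite. Fix $a\in M$; its class $E(a,M)$ is definable, so by minimality it is finite or cofinite. If some class were cofinite, the complement would be finite, and there would be only finitely many classes. This contradicts the hypothesis that there are infinitely many classes, so every class is finite.

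Next, suppose infinitely many classes fail to be $<$-antichains. Consider the definable set
\[D=\{x\in M : \exists y\,(E(x,y)\wedge (x<y \vee y<x))\}.\]
Each non-antichain class contributes at least one element to $D$, and the classes are disjoint, so $D$ is infinite and hence cofinite. Refine this with
\[D_1=\{x : \exists y\,(E(x,y)\wedge x<y)\}.\]
Every non-antichain class meets $D_1$: a comparable pair $x<y$ in the class puts $x$ in $D_1$. So $D_1$ is also infinite and thus cofinite. In each class $C$, a $<$-maximal element of $C$ does not lie in $D_1$, because $C$ is finite and $<$ is a strict order. Since there are infinitely many classes, there are infinitely many elements outside $D_1$. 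This contradicts the cofiniteness of $D_1$.

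The argument never uses that $M$ is $\TypeOm$, only minimality and the finiteness of the classes. So the key step is the maximal-element trick: each class contains a point with no $E$-related point above it. The set of points having an $E$-related point above them is definable, so by minimality it is either finite or cofinite. It cannot be cofinite, because infinitely many classes each leave out their maximal element. Hence it is finite, so all but finitely many classes are antichains.

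The only point that needs care is finiteness of the classes. This is exactly what guarantees the existence of maximal elements, and it is established in the first step.
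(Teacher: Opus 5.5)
Your proof is correct and is essentially the paper's argument. Your set $D_1$ is exactly the complement of the paper's set $\mathrm{Max}$ of elements maximal in their $E$-class, and both proofs conclude that this definable set misses infinitely many points (one maximal element per finite class), so it is finite; the auxiliary set $D$ and the contradiction framing are unnecessary but harmless.
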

\begin{proof}
Since $E$ has infinitely many classes, there can be no infinite (hence cofinite) class. Let $\mathrm{Max} := \{a \in M : a \text{ is maximal in } a/E\}$. $\mathrm{Max}$ is definable and infinite, and therefore cofinite. Hence in almost every class there are no $a, b$ with $a < b$.
\end{proof}

\begin{proposition}
\label{prop:omega_extends} 
Suppose that $R$ is a definable strict order on $M$ that interprets $\omega$. Then $R$ extends $<$ on a cofinite subset of $M$. If $<$ is normalized, then for almost all $a \in M$, we have $\Up_<(a) \subseteq \Up_R(a)$.
\end{proposition}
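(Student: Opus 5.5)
The plan is to first reduce to the case where $<$ is normalized, and then prove the stronger ``pointwise'' statement: for almost all $a$, $\Up_<(a) \subseteq \Up_R(a)$. This pointwise statement immediately gives that $R$ extends $<$ on a cofinite set $C$, namely the set of good $a$. The reduction is harmless. The normalization $<'$ of $<$ is almost equal to $<$, so it agrees with $<$ on some cofinite set $C_0$. If $R$ extends $<'$ on a cofinite $C_1$, then $R$ extends $<$ on $C_0 \cap C_1$. So from now on I assume $<$ is normalized, i.e.\ $L_<(M) = M$, and hence $\Low_<(f)$ is finite for every $f \in M$.

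Fix $a$ and $b$ with $a < b$ and $\lnot R(a,b)$. Since $R$ interprets $\omega$, $\parallel_R$ is an equivalence relation whose classes are the finite levels. So either $b \parallel_R a$ or $R(b,a)$, and I treat these two failures separately.

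In the case $b \parallel_R a$, the elements $a$ and $b$ lie in the same level, and that level is not a $<$-antichain. By Lemma \ref{lem:eq_antichain} applied to $E = \parallel_R$, which has infinitely many classes because $R$ has infinite chains, almost all levels are antichains. The exceptional levels are finitely many and each level is finite. So the set $F_1$ of elements lying in a non-antichain level is finite.

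In the case $R(b,a)$, consider the definable set
\[B := \{a \in M : \exists b\,(a < b \land R(b,a))\}.\]
I claim $B$ is finite, and I expect this to be the main step. Suppose not. Then by minimality $B$ is cofinite; let $F = M \setminus B$, which is finite. Start from any $a_0 \in B$. While $a_i \in B$, pick $a_{i+1} > a_i$ with $R(a_{i+1}, a_i)$, so that $\Lev_R(a_{i+1}) < \Lev_R(a_i)$. Levels are natural numbers, so this process must stop after at most $\Lev_R(a_0)+1$ steps, and it can only stop at an element $a_k \in F$. By transitivity $a_0 < a_k$, so
\[a_0 \in \bigcup_{f \in F} \Low_<(f).\]
The right-hand side is finite, because each $\Low_<(f)$ is finite by normalization. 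Hence $B$ itself is finite, contradicting the assumption that $B$ is cofinite. This argument is where normalization is genuinely needed: without it, an element of $F$ could lie in $U_<(M)$ and have a cofinite lower cone.

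Finally, for $a \notin B \cup F_1$ and any $b > a$, neither failure can occur, so $R(a,b)$ holds. Since $B \cup F_1$ is finite, this gives $\Up_<(a) \subseteq \Up_R(a)$ for almost all $a$. Taking $C = M \setminus (B \cup F_1)$, $R$ extends $<$ on $C$, and by the reduction above this yields the first claim for arbitrary $<$.
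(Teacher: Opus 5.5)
Your proposal is correct and follows essentially the paper's argument. The paper uses Lemma~\ref{lem:eq_antichain} to get a cofinite $D$ that rules out the $\parallel_R$ case, then derives a contradiction from a $<$-increasing, $R$-decreasing chain started at some $a_0$ with $\Up_<(a_0)\subseteq C\cap D$. You instead isolate that case as the finite set $F_1$, define the bad set directly with $R(b,a)$, and run the descent until it leaves $B$, which is just the contrapositive arrangement of the same normalization-plus-well-foundedness argument.
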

\begin{proof}
First suppose that $<$ is normalized. Let
\[D := \{a \in M : \Inc_R(a) \text{ is an antichain with respect to} <\}.\]
By Lemma \ref{lem:eq_antichain} $D$ is cofinite. For any $a \in D$ and $b \in M$, whenever $a < b$ and $ \lnot a R b$, we have $b R a$. Now suppose for a contradiction that $C := \{a \in M : \exists b \in M (a < b \land \lnot a R b)\}$ is infinite. $C$ is definable, hence cofinite. Construct an infinite sequence $a_0 < a_1 < \ldots \in C \cap D$ inductively as follows. Let $a_0 \in C \cap D$ such that $\Up_<(a_0) \subseteq C \cap D$. Having constructed $a_0, \ldots, a_n$, since $a_n \in C \cap D$, we can choose $a_{n+1}$ witnessing $a_n < a_{n+1} \land a_{n+1} R a_n$. Then $a_0 < a_{n+1}$, hence $a_{n+1} \in \Up_<(a_0) \subseteq C \cap D$ as required.

By construction, $a_0, a_1, \ldots$ is an infinite decreasing sequence with respect to $R$. Then $\Lev_R(a_0), \Lev_R(a_1), \ldots$ is an infinite decreasing sequence in $(\omega, <)$, a contradiction.
For arbitrary $<$, apply this argument to its normalization, which agrees with $<$ on a cofinite subset.
\end{proof}

We note that in Lemma \ref{lem:eq_antichain}, $<$ can be replaced by any definable strict order. In Proposition \ref{prop:omega_extends}, it can be replaced by any definable strict order of $\TypeOm$. As a corollary we obtain:

\begin{proposition}\label{prop:uniqueness} Assume that $R$ is a definable strict order on $M$ that interprets $\omega$. Then up to almost equality $R$ is the largest definable order on $M$ of $\TypeOm$. In particular, any two definable strict orders on $M$ that interpret $\omega$ are almost equal.
\end{proposition}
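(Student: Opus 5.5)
The plan is to apply Proposition \ref{prop:omega_extends}, with $<$ replaced by an arbitrary definable strict order $S$ of $\TypeOm$; the remark after that proposition says this replacement is allowed. This yields a cofinite set on which $R$ extends $S$, which is exactly the assertion that $R$ is the largest $\TypeOm$ order up to almost equality. Uniqueness will then come from comparing two maximal elements.

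\textbf{Step 1 ($R$ is of $\TypeOm$).} Since $R$ interprets $\omega$, $(M,R)$ contains an infinite ascending chain: choose one element from each level. It has no infinite descending chain, because the levels $\Lev_R$ of such a chain would descend in $\omega$. Fact \ref{fact:chains_type} then gives that $(M,R)$ is $\TypeOm$, so $R$ itself lies in the class under consideration.

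\textbf{Step 2 (maximality).} Let $S$ be any definable strict order on $M$ such that $(M,S)$ is $\TypeOm$. I will rerun the proof of Proposition \ref{prop:omega_extends} with $S$ in place of $<$. First normalize $S$ to $S'$, which agrees with $S$ on a cofinite set. By Lemma \ref{lem:eq_antichain}, applied to the equivalence relation $\parallel_R$ (which has infinitely many classes) and the order $S'$, almost every $R$-level is an $S'$-antichain. Let $D$ be the cofinite set of $a$ with $\Inc_R(a)$ an $S'$-antichain. Suppose $C=\{a : \exists b\,(a S' b \land \lnot aRb)\}$ were infinite, hence cofinite. Because $S'$ is normalized of $\TypeOm$, every $\Up_{S'}(a)$ is cofinite, so I can pick $a_0$ with $\Up_{S'}(a_0)\subseteq C\cap D$. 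The induction from the proof of Proposition \ref{prop:omega_extends} then builds an $S'$-ascending sequence that is $R$-descending. This is impossible in $\omega$. Hence $C$ is finite, so $R$ extends $S'$ on a cofinite set, and therefore extends $S$ on a cofinite set.

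The delicate point is the case $b\in\Inc_R(a)$ with $aS'b$, which is excluded by $a\in D$. The other point needing care is that the induction keeps the $a_n$ inside $\Up_{S'}(a_0)$; this uses transitivity of $S'$ and the $\TypeOm$ normalization. I expect this transfer to be the only real content of the proof. Everything else is bookkeeping with cofinite sets, using the fact that finitely many cofinite sets intersect in a cofinite set.

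\textbf{Step 3 (uniqueness).} Let $R_1$ and $R_2$ both interpret $\omega$. By Step 1 each is of $\TypeOm$. Step 2 then gives cofinite sets $C_1$ and $C_2$ with $R_1\supseteq R_2$ on $C_1$ and $R_2\supseteq R_1$ on $C_2$. The two relations therefore agree on $C_1\cap C_2$, which is cofinite, so $R_1$ and $R_2$ are almost equal.
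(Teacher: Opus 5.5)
Your proposal is correct and follows the paper's own route. The paper's proof is the one-line observation that Proposition~\ref{prop:omega_extends} applies to an arbitrary definable $\TypeOm$ order in place of $<$. You rerun that argument explicitly with the normalized $S'$, and you also spell out two points the paper leaves implicit: that $R$ is itself of $\TypeOm$, and that mutual extension on cofinite sets yields almost equality.
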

\begin{proof}
This follows directly from Proposition \ref{prop:omega_extends} applied to an arbitrary definable strict order of $\TypeOm$ in place of $<$.
\end{proof}

Proposition \ref{prop:uniqueness} in particular asserts the existence of the largest (up to almost equality) definable strict order of $\TypeOm$, assuming that $M$ interprets $\omega$. We will now establish the essential converse. For a definable strict order $R$, let
\[R^+(x, y) := \Up_R(y) \subsetneq \Up_R(x).\]
Then $R^+$ is a definable strict order extending $R$.
\begin{proposition}
\label{prop:maximal_interprets}
Suppose that $R$ is a normalized definable strict order on $M$ of $\TypeOm$ with $R = R^+$. Then $R$ interprets $\omega$.
\end{proposition}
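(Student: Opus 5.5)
The plan is to show that $\parallel_R$ coincides with the definable equivalence relation
\[E(x,y) \iff \Up_R(x) = \Up_R(y).\]
Then $R$ induces a strict linear order on $M/\parallel_R$, and it remains to identify that order with $(\omega,<)$. Since $R$ is normalized of $\TypeOm$, we have $L_R(M) = M$. So $\Up_R(a)$ is cofinite and $\Low_R(a)$ is finite for every $a \in M$, with no exceptions. This is the only place where normalization is used, and the argument relies on it for every element.

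First, $E \subseteq \parallel_R$. If $xRy$, then $y \in \Up_R(x)$ but $y \notin \Up_R(y)$ by irreflexivity, so $\Up_R(x) \neq \Up_R(y)$. Hence $E$-related elements are $R$-incomparable.

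The main step is the converse, $\parallel_R \subseteq E$. Suppose $x \parallel_R y$ and $\Up_R(x) \neq \Up_R(y)$. Neither up-cone can properly contain the other: if $\Up_R(y) \subsetneq \Up_R(x)$, then $R^+(x,y)$ holds, and $R = R^+$ gives $xRy$, a contradiction; the other containment is symmetric. So both differences $\Up_R(x)\setminus\Up_R(y)$ and $\Up_R(y)\setminus\Up_R(x)$ are nonempty. They are also finite, because each is contained in the complement of a cofinite up-cone.

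Pick $z$ that is $R$-maximal in the finite set $\Up_R(x)\setminus\Up_R(y)$. Every element above $z$ lies in $\Up_R(x)$ by transitivity, and by maximality it is not in the difference. Hence $\Up_R(z) \subseteq \Up_R(y)$. The inclusion cannot be proper, since that would give $R^+(y,z)$, hence $yRz$, contradicting $z \notin \Up_R(y)$. So $\Up_R(z) = \Up_R(y)$.

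Symmetrically, pick $w$ that is $R$-maximal in $\Up_R(y)\setminus\Up_R(x)$; the same argument gives $\Up_R(w) = \Up_R(x)$. Now $z \in \Up_R(x) = \Up_R(w)$, so $wRz$. Also $w \in \Up_R(y) = \Up_R(z)$, so $zRw$. This contradicts asymmetry of $R$. Therefore $\parallel_R = E$ is an equivalence relation, and $R$ interprets some linear order $(A,<')$.

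It remains to identify $(A,<')$ with $(\omega,<)$. The classes of $\parallel_R$ are finite: this follows from Lemma \ref{lem:eq_antichain} once there are infinitely many classes, or directly, since an $E$-class is contained in $M \setminus \Up_R(a)$ together with $\Low_R(a)$. There are infinitely many classes because $R$ has infinite chains. Every element of $A$ has only finitely many $<'$-predecessors, since the predecessors of the class of $a$ are classes of elements of the finite set $\Low_R(a)$. An infinite linear order in which every element has finitely many predecessors is isomorphic to $(\omega,<)$, which completes the plan.

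I expect the main obstacle to be the middle step, choosing $z$ and $w$ as maximal elements of the finite differences. The rest is bookkeeping once one notices that $R = R^+$ forbids proper inclusion of up-cones between incomparable elements.
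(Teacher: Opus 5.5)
Your proof is correct, but it takes a somewhat different route from the paper's.

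\textbf{The paper's route.} The paper argues locally inside $\Inc_R(x)$. It picks an $R$-maximal $y \in \Inc_R(x)$ and uses maximality to get $\Up_R(y) \subseteq \Up_R(x)$, which $R = R^+$ upgrades to equality. It then shows that such a $y$ is also $R$-minimal in $\Inc_R(x)$, since any $y'$ below it would give $R^+(y',x)$. So $\Inc_R(x)$ is an antichain, and the paper concludes by Remark \ref{rem:almost_linear_antichains}(i).

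\textbf{Your route.} You identify $\parallel_R$ outright with the kernel of $a \mapsto \Up_R(a)$. You choose maximal elements in the finite differences $\Up_R(x)\setminus\Up_R(y)$ and $\Up_R(y)\setminus\Up_R(x)$, rather than in $\Inc_R(x)$. Your final contradiction comes from asymmetry of $R$, not from a further application of $R=R^+$.

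\textbf{Comparison.} Both arguments rest on the same two ingredients. First, finiteness comes from $L_R(M)=M$. Second, $R=R^+$ forbids proper inclusion of upper cones between incomparable elements.
\begin{itemize}
\item Your version gives an explicit description of the levels as the classes of ``same upper cone'', which the paper obtains only implicitly. The paper's version is a bit shorter.
\item You also check explicitly that the interpreted order is $(\omega,<)$: it is infinite because $R$ has infinite chains, and every class has finitely many predecessors because each $\Low_R(a)$ is finite. The paper leaves this step tacit after invoking the remark, which by itself only yields some linear order.
\end{itemize}
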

\begin{proof}
Take any $x \in M$. As $R$ is normalized, $\Inc_R(x)$ is finite. Suppose $y$ is an $R$-maximal element of $\Inc_R(x)$. We show $\Up_R(y) = \Up_R(x)$. Suppose $z \in \Up_R(y)$. Then $z$ and $x$ are $R$-comparable. If $R(z, x)$ then $R(y, x)$, a contradiction, so $z \in \Up_R(x)$. Hence $\Up_R(y) \subseteq \Up_R(x)$. As $R$ agrees with $R^+$, we have $\lnot R^+(x, y)$, i.e. $\Up_R(y)$ is not a proper subset of $\Up_R(x)$. Now suppose that $y$ is not $R$-minimal in $\Inc_R(x)$. Then there is some $y' \in \Inc_R(x)$ with $R(y', y)$. Then $\Up_R(y') \supsetneq \Up_R(y) = \Up_R(x)$, so $R^+(y',x)$, a contradiction. Hence for all $x \in M$ we have that $\Inc_R(x)$ is an $R$-antichain, so the result follows from Remark \ref{rem:almost_linear_antichains}(i).
\end{proof}

As a corollary we obtain the following characterization:
\begin{theorem}
\label{thm:characterization} Let $R$ be a definable strict order of $\TypeOm$.
\begin{enumerate}[(1)]
\item If $R$ is normalized, then $R$ interprets $\omega$ if and only if $R = R^+$.
\item If $R$ interprets $\omega$, then its almost equality class is the greatest class among definable $\TypeOm$ orders. Conversely, if there is a maximal such class, then it has a representative that interprets $\omega$.
\end{enumerate}
\end{theorem}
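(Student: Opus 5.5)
For part (1), the direction ``$R=R^+$ implies $R$ interprets $\omega$'' is exactly Proposition \ref{prop:maximal_interprets}. For the converse, assume $R$ is normalized and interprets $\omega$. We must show that $R^+(x,y)$ implies $R(x,y)$, since $R^+$ always extends $R$.

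The key claim is that for an $R$ interpreting $\omega$, $\Up_R(x)$ consists exactly of the elements of level greater than $\Lev_R(x)$. Hence $\Up_R(y)\subsetneq\Up_R(x)$ forces $\Lev_R(x)<\Lev_R(y)$. To see this, note first that levels are nonempty. Then if $\Lev_R(y)\le\Lev_R(x)$, the set $\Up_R(y)$ contains every element of level $>\Lev_R(y)$, and in particular all of $\Up_R(x)$. So it is not a proper subset of $\Up_R(x)$, unless the levels are equal, in which case the upper cones coincide. Thus $R^+\subseteq R$, and in fact this direction does not use normalization at all, only that $\parallel_R$ is the level equivalence. The one point needing care is that ``$R$ interprets $\omega$'' means $R(x,y)\iff \Lev_R(x)<\Lev_R(y)$ exactly, with no exceptional elements. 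This follows from Definition \ref{def:order_interprets}, because $R$ is the pullback of the order on the quotient.

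For part (2), the first sentence is Proposition \ref{prop:uniqueness} restated. The order among almost equality classes is ``$[S]\le[R]$ iff $R$ extends $S$ on a cofinite set'', which is well defined. For the converse, suppose $[R]$ is a maximal class among definable $\TypeOm$ orders. Replace $R$ by its normalization $R'$, which lies in the same class, is still $\TypeOm$ and is normalized. Then consider $R'^+$. It is a definable strict order extending $R'$, so it has infinite chains, and by Fact \ref{fact:type_fixed} it is $\TypeOm$ or $\TypemO$. Since it extends $R'$, which has infinite ascending chains, Fact \ref{fact:chains_type} rules out $\TypemO$, so $R'^+$ is $\TypeOm$.

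By maximality, $R'^+$ is almost equal to $R'$. The main obstacle is upgrading almost equality to actual equality $R'=R'^+$, so that Proposition \ref{prop:maximal_interprets} applies. I would handle this by re-normalizing. Let $R''$ be the normalization of $R'^+$. It is almost equal to $R'$, so the task reduces to checking whether $R''=R''^+$, and this could fail on finitely many elements. Instead, I would inspect the proof of Proposition \ref{prop:maximal_interprets}: it uses $R=R^+$ only to get that $\Inc_R(x)$ is an antichain. Since $R'$ and $R'^+$ agree on a cofinite set $C$, that argument runs for $x$ in a cofinite set, giving $\Inc_{R'}(x)$ an antichain for almost all $x$. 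Let $P$ be the cofinite definable set of such $x$. Then modify $R'$ into an almost equal order by placing the finitely many bad elements together as an antichain at the bottom (below everything else). This keeps incomparability sets as antichains everywhere, so by Remark \ref{rem:almost_linear_antichains}(i) the resulting representative interprets $\omega$. Verifying that this finite modification preserves transitivity and the antichain property is the delicate step.
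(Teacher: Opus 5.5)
Your proof is correct; for the converse in (2) it takes a different route from the paper's proof.

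\textbf{The paper's route.} The paper chooses $a \in C$ with $\Up_R(a) \subseteq C$ and turns the finite set $M \setminus \Up_R(a)$ into an antichain placed below $\Up_R(a)$. This leaves the upper cones of elements of $\Up_R(a)$ unchanged, so the new order satisfies $S = S^{+}$ exactly, and item (1) is then applied as a black box.

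\textbf{Your route.} You rerun the proof of Proposition \ref{prop:maximal_interprets} on the set of $x$ with $x \in C$ and $\Inc_{R'}(x) \subseteq C$. This set is cofinite because each $\Inc_{R'}(z)$ is finite for normalized $R'$ and incomparability is symmetric. You then move to the bottom the finite definable set $F$ of points whose incomparability set is not an antichain. This avoids computing $S^{+}$, at the cost of reopening that proof.

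\textbf{Filling in your steps.} The step you call delicate is routine. Transitivity is the standard check for an antichain placed below a suborder. For $x \in F$ one has $\Inc_T(x) = F$. For $x \notin F$, $\Inc_T(x) = \Inc_{R'}(x) \setminus F$ is a $T$-antichain because $T$ agrees with $R'$ off $F$.

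Remark \ref{rem:almost_linear_antichains}(i) by itself only gives you \emph{some} linear order. To get $\omega$, add that every $\Up_T(x)$ is cofinite and that $T$ has infinite chains. Then the quotient is infinite and each class has only finitely many predecessors.

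\textbf{What your approach adds.} In part (1), your level computation yields the exact equality $R = R^{+}$, even without normalization. Proposition \ref{prop:uniqueness} alone would only give almost equality there. Your check that $(R')^{+}$ is again $\TypeOm$, which is needed for maximality to apply, makes explicit a step the paper leaves implicit.
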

\begin{proof}
Item (1) and the first part of item (2) follow from Propositions \ref{prop:uniqueness} and \ref{prop:maximal_interprets}. For the ``conversely'' part, let $R$ be a normalized element of a maximal class. Then $R$ is almost equal to $R^{+}$. Let $C$ be a cofinite set on which $R$ and $R^{+}$ agree and let $a \in C$ such that $\Up_R(a) \subseteq C$ (e.g. take $a \in \bigcap_{x \notin C} \Up_R(x)$). Let $S$ be the definable strict order that agrees with $R$ on $\Up_R(a)$ with the elements of $M \setminus \Up_R(a)$ forming an antichain prepended before $\Up_R(a)$. Then $S$ is almost equal to $R$, $S$ is normalized, and $S = S^{+}$. The conclusion follows from item (1).
\end{proof}

\subsection{Extending preorders} In this section we give a constructive criterion for the existence of a definable strict order interpreting $\omega$. We again assume that $M$ is a minimal structure with a distinguished definable strict order $<$ such that $(M, <)$ is of $\TypeOm$. By Proposition \ref{prop:omega_extends}, if $M$ admits an order $S$ that interprets $\omega$, then $S$ extends $<$ almost everywhere. We will inductively construct a sequence of definable preorders extending $<$ which stabilizes after finitely many steps precisely if $M$ interprets $\omega$, in which case a normalization of the strict part of the final preorder interprets $\omega$. The essential step of the construction is the operation $R \mapsto R^+$. However, the construction becomes clearer after changing the vocabulary to preorders instead of strict orders. For a definable strict order $R$ that interprets $(A, <)$, the relation $P(x,y) := R(x,y) \lor x \parallel_{R} y$ is a definable linear preorder such that $R(x,y) \iff x <_P y$ and $x \parallel_R y \iff x \sim_P y$. Conversely, a definable linear preorder $P$ induces the strict part $<_P$ whose incomparability relation is an equivalence relation. This leads to the following characterization:
\begin{fact} \label{fact:order_preorder} $M$ interprets $\omega$ if and only if $M$ admits a definable linear preorder with infinite (strict) chains.
\end{fact}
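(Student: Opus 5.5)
The plan is to prove the two directions separately, using the dictionary between strict orders that interpret linear orders and linear preorders set up just before the statement.

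For the forward direction, suppose $M$ interprets $\omega$. Then there is a definable equivalence relation $E$ and a definable structure on $M/E$ isomorphic to an expansion of $(\omega,<)$. Pull the order back along the quotient map and call the result $R$. As in the subsection on interpreting orders, $R$ is a definable strict order with $\parallel_R = E$. I then set
\[P(x,y) :\iff R(x,y) \lor x \parallel_R y.\]
I need three facts about $P$.
\begin{enumerate}[(i)]
\item $P$ is reflexive, since $x \parallel_R x$.
\item $P$ is total: for any $x,y$, either $R(x,y)$, or $R(y,x)$, or $x \parallel_R y$ holds. In the first and third cases $P(x,y)$; in the second, $P(y,x)$.
\item $P$ is transitive. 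I will check this by cases, using that $\parallel_R$ is an equivalence relation and that $R$ is compatible with it. That is, if $x \parallel_R x'$ and $R(x,y)$, then $R(x',y)$. This holds because $R$ is the pullback of a linear order along the quotient map.
\end{enumerate}
The strict part $<_P$ coincides with $R$, and $R$ has an infinite chain because $\omega$ is infinite. So $P$ has infinite strict chains.

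For the converse, let $P$ be a definable linear preorder with infinite strict chains. I will show that $\parallel_{<_P}$ coincides with $\sim_P$. By totality of $P$, two elements are $<_P$-incomparable exactly when each is $P$-below the other. In particular $\parallel_{<_P}$ is an equivalence relation, and $<_P$ induces a strict linear order on $M/\sim_P$. That order is infinite because there are infinite $<_P$-chains, and it is interpreted in $M$ in dimension $1$. By the Fact listing the infinite minimal linear orders, the quotient is interdefinable with either $(\omega,<)$ or $(\omega+\omega^*,<)$. Lemma~\ref{lem:limits}(ii) rules out $\omega+\omega^*$, since $(M,<)$ is $\TypeOm$ by standing assumption.

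The step I expect to be the main obstacle is getting $\omega$ itself, rather than a structure merely interdefinable with it. Interdefinability alone does not immediately give the correct order type. I would argue this as follows. By Fact~\ref{fact:chains_type}, a $\TypeOm$ structure has no infinite descending chains after normalization, so I will pass to the normalization of $<_P$. Every class of $\sim_P$ is finite by Lemma~\ref{lem:eq_antichain}, so normalizing only rearranges finitely many classes and keeps the preorder linear. Then every proper initial segment of the quotient is finite, because each element has cofinite upper cone. A countable linear order in which every proper initial segment is finite has type $\omega$. So the normalized order interprets $\omega$, and $M$ interprets $\omega$.
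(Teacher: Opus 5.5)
Your forward direction and the identification $\parallel_{<_P}=\sim_P$ in the converse are exactly the dictionary the paper uses to justify this Fact, and both are fine.

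The gap is in your last paragraph. There you apply facts about normalized $\TypeOm$ orders to $<_P$, but the standing assumption is only that $(M,<)$ is $\TypeOm$. By Fact~\ref{fact:type_fixed}, $(M,<_P)$ is either $\TypeOm$ or $\TypemO$, and the second case really occurs: in $(\omega,<)$ take $P$ to be $\geq$. Normalizing $<_P$ then gives an order in which every element has a cofinite \emph{lower} cone. So your claim that ``each element has cofinite upper cone'' fails, every proper initial segment of the quotient is infinite, and the quotient has type $\omega^*$. Lemma~\ref{lem:limits}(ii) cannot exclude this case: it only rules out $\omega+\omega^*$, and $\omega^*$ is interdefinable with $\omega$.

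The repair is one line. If $<_P$ is $\TypemO$, replace $P$ by its converse $P^{-1}(x,y):\iff P(y,x)$. This is again a definable linear preorder with infinite strict chains, now with $\TypeOm$ strict part, and your normalization argument then goes through as written. Once you use Fact~\ref{fact:type_fixed} this way, the detour through the classification of minimal linear orders and Lemma~\ref{lem:limits} is unnecessary, since Fact~\ref{fact:type_fixed} already excludes $\TypeOmO$ for $<_P$.

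Conversely, if you do accept that classification, interdefinability of the quotient with $(\omega,<)$ already gives an $M$-definable order of type $\omega$ on $M/{\sim_P}$. That is all the paper's notion of interpretation asks for, which is why the paper's justification ends with the dictionary.
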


Suppose that $P$ is a definable preorder on $M$. Define
\[P^+(x, y) := \Up_{<_P}(y) \subseteq \Up_{<_P}(x).\]
It is easy to check that $P^+$ is also a definable preorder on $M$ that extends $P$.

We will also use the following characterization of $P^+$. Let
\[A_P(x, y) \iff x \parallel_{P} y \land \lnot \exists z ~(x \parallel_{P} z \land y <_P z).\]
(i.e., $A_P(x, y)$ says that $y$ is a $<_P$-maximal element of $\Inc_P(x)$).

\begin{lemma} \label{lem:preoder_by_maximal} With the notation as above:
\begin{enumerate}[(i)]
\item $P^+(x, y)$ if and only if $P(x, y) \lor A_P(x, y).$
\item $<_{P^+}$ extends $<_P$.
\end{enumerate}
\end{lemma}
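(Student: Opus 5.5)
The plan is to prove both items by unwinding the definitions of $<_P$, $\parallel_P$ and $\Up_{<_P}$, using only the standard mixed transitivity of a preorder: if $P(x,y)$ and $y <_P z$, then $x <_P z$. Indeed, $P(x,z)$ holds by transitivity, and $P(z,x)$ would give $P(z,y)$, contradicting $y <_P z$. Symmetrically, $x <_P y$ and $P(y,z)$ give $x <_P z$. I will also use that for any $x,y$ exactly one of $P(x,y)$, $y <_P x$, $x \parallel_P y$ holds. No minimality or type assumptions are needed; this is pure order theory.

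For (i), direction ``$\Leftarrow$'', fix $z$ with $y <_P z$; I must show $x <_P z$.
\begin{itemize}
\item If $P(x,y)$, this is the mixed transitivity above.
\item If $A_P(x,y)$, then $x \parallel_P y$, and $z$ cannot be $P$-incomparable with $x$ by the maximality clause of $A_P$. Also $P(z,x)$ is impossible: together with $P(y,z)$ it would give $P(y,x)$, contradicting $x \parallel_P y$. Hence $P(x,z)$ and $\lnot P(z,x)$, i.e.\ $x <_P z$.
\end{itemize}
For direction ``$\Rightarrow$'', assume $P^+(x,y)$ and $\lnot P(x,y)$. First rule out $y <_P x$: otherwise $x \in \Up_{<_P}(y) \subseteq \Up_{<_P}(x)$, giving $x <_P x$, which is absurd. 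So $x \parallel_P y$. If some $z$ had $x \parallel_P z$ and $y <_P z$, then $z \in \Up_{<_P}(y) \subseteq \Up_{<_P}(x)$, so $x <_P z$, contradicting $x \parallel_P z$. Thus $A_P(x,y)$ holds.

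For (ii), let $x <_P y$. Then $P(x,y)$, so $P^+(x,y)$ by (i) (or simply because $P^+$ extends $P$). It remains to see $\lnot P^+(y,x)$. By (i), $P^+(y,x)$ would mean $P(y,x)$ or $A_P(y,x)$. The first fails since $x <_P y$. The second requires $y \parallel_P x$, which also fails. Hence $x <_{P^+} y$.

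The only step needing any care is the ``$\Rightarrow$'' direction of (i). There one must first exclude the case $y <_P x$ before concluding $x \parallel_P y$, and the key observation is the self-membership contradiction $x \in \Up_{<_P}(x)$. Everything else is routine.
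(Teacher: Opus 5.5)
Your proof is correct and follows essentially the same route as the paper's. In (i) you use the same self-membership contradiction $x \in \Up_{<_P}(x)$ and the same maximality argument. In (ii) you spell out the paper's one-line remark that $A_P$ only relates $P$-incomparable pairs, so $P^+(y,x)$ cannot hold when $x <_P y$.
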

\begin{proof}
(i) ($\Rightarrow$): assume $P^+(x, y)$ and suppose $\lnot P(x, y)$. If $P(y, x)$, then $y <_P x$, so $x \in \Up_{<_P}(y) \subseteq \Up_{<_P}(x)$, a contradiction. So $y \in \Inc_P(x)$. Suppose there is $z \in \Inc_P(x)$ with $y <_P z$. Then $z \in \Up_{<_P}(y) \setminus \Up_{<_P}(x)$, a contradiction. Hence $y$ is $<_P$-maximal in $\Inc_P(x)$.\\
($\Leftarrow$): if $P(x, y)$ then clearly $P^+(x,y)$. If $A_P(x, y)$, take any $z \in \Up_{<_P}(y)$. Since $y$ is $<_P$-maximal in $\Inc_P(x)$, $x$ and $z$ are $P$-comparable. If $P(z, x)$, then $P(y, x)$, contradicting $A_P(x, y)$. Hence $x <_P z$, and so $z \in \Up_{<_P}(x)$.\\
(ii) $A_P$ only adds edges between $P$-incomparable elements, so $x <_P y \Rightarrow x <_{P^+} y$.
\end{proof}

By a \emph{chain} with respect to a preorder $P$ we mean a chain with respect to $<_P$.

\begin{lemma} \label{lem:preorder_stab} Let $P$ be a definable preorder on $M$ with infinite chains. If $P = P^+$, then $P$ is linear.
\end{lemma}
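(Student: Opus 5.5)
The plan is to show that the hypothesis $P = P^+$ forces every $P$-incomparable set to be empty. We argue by contradiction, combining Lemma \ref{lem:preoder_by_maximal}(i) with the type classification of the strict order $<_P$.

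\emph{Step 1: $P = P^+$ kills all maximal incomparable elements.} By Lemma \ref{lem:preoder_by_maximal}(i), $P^+ = P \lor A_P$. So $P = P^+$ means that $A_P(x,y)$ implies $P(x,y)$. But $A_P(x,y)$ includes $x \parallel_P y$, which excludes $P(x,y)$. Hence $A_P$ is empty, i.e. for every $x$ the set $\Inc_P(x)$ has no $<_P$-maximal element.

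\emph{Step 2: nonempty incomparable sets are cofinite and contain ascending chains.} Fix $x$ with $\Inc_P(x) \neq \emptyset$. Since no element of $\Inc_P(x)$ is $<_P$-maximal in it, we can build an infinite strictly $<_P$-ascending chain inside $\Inc_P(x)$. In particular $\Inc_P(x)$ is infinite. Being definable, it is then cofinite by minimality.

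\emph{Step 3: use the type of $<_P$.} The relation $<_P$ is a definable strict order with infinite chains. Since $(M,<)$ is of $\TypeOm$, Fact \ref{fact:type_fixed} says $(M,<_P)$ is $\TypeOm$ or $\TypemO$. Suppose $\Inc_P(x) \neq \emptyset$ for some $x$. Then Step 2 yields an infinite $<_P$-ascending chain, so Fact \ref{fact:chains_type} rules out $\TypemO$, and $(M,<_P)$ is $\TypeOm$. Hence $I_{<_P}(M)$ is finite.

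Now let
\[X := \{x \in M : \Inc_P(x) \neq \emptyset\},\]
which is definable. For $x \in X$ we have $\Inc_P(x) \subseteq \Inc_{<_P}(x)$, because $P$-incomparable elements are certainly $<_P$-incomparable. By Step 2, $\Inc_{<_P}(x)$ is therefore cofinite, so $x \in I_{<_P}(M)$. Thus $X$ is finite.

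\emph{Step 4: conclude.} Suppose $X \neq \emptyset$ and pick $x \in X$. Then $\Inc_P(x)$ is cofinite, hence it contains some $y \notin X$. But $P$-incomparability is symmetric, so $x \in \Inc_P(y)$, giving $y \in X$, a contradiction. Therefore $X = \emptyset$: any two elements are $P$-comparable, i.e. $P$ is linear.

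The step needing the most care is Step 3. There we must make sure that the type facts apply to the strict part $<_P$ rather than to $P$ itself. We must also check that the inclusion $\Inc_P(x) \subseteq \Inc_{<_P}(x)$ holds. It does: if $x <_P y$ then $P(x,y)$, so $<_P$-comparability implies $P$-comparability.
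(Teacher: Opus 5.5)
Your proof is correct, and it takes a different route from the paper's.

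\textbf{The paper's route.} It first proves $I_{<_P}(M)=\emptyset$ directly. It takes a $<_P$-maximal $b$ in the finite set $I_{<_P}(M)$ and uses finiteness of the $\sim_P$-classes to see that $\Inc_P(b)$ is cofinite. It then shows that cofinitely many $a\in\Inc_P(b)$ satisfy $a<_P x$ for all $x\in\Up_{<_P}(b)$. Hence $P^+(a,b)$ holds for those $a$, so $b\in U_{<_{P^+}}(M)$, contradicting $P=P^+$. Only afterwards, with every $\Inc_P(a)$ finite, does it pick a $<_P$-maximal element to produce an instance of $A_P$.

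\textbf{Your route.} You use $P=P^+$ at the outset to show $A_P$ is empty. Then every nonempty $\Inc_P(x)$ is infinite, hence cofinite, which puts $x$ into the finite set $I_{<_P}(M)$. Symmetry of $\parallel_P$ then forces the set of such $x$ to be empty.

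\textbf{What each buys.} Your argument is shorter. It needs only the easy inclusion $\Inc_P(x)\subseteq\Inc_{<_P}(x)$, not the finiteness of bicomparability classes. Your appeal to Facts~\ref{fact:type_fixed} and~\ref{fact:chains_type} to pin down $\TypeOm$ is superfluous: finiteness of $I_{<_P}(M)$ already follows from $<_P$ having infinite chains. The paper's route gives a statement that does not depend on $P=P^+$: a $<_P$-maximal element of $I_{<_P}(M)$ always lands in $U_{<_{P^+}}(M)$, so each application of ${}^+$ strictly shrinks $I$. The proof of Theorem~\ref{thm:characterization2} reuses exactly this to get $I_t=\emptyset$ after $t=|I_0|$ iterations. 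Your argument needs $A_P$ to vanish globally, so it would not supply that.
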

\begin{proof}
Every bicomparability class of a definable preorder with infinite chains is finite: otherwise it would be cofinite and would meet an infinite strict chain in at least two points. By Lemma \ref{lem:preoder_by_maximal}(ii), this also applies to $P^+$.

First we show that $I_{<_P}(M) = \emptyset$. Assume for a contradiction that $I_{<_P}(M) \neq \emptyset$ and take a $<_P$-maximal $b$ in this finite set. Then $\Up_{<_P}(b)$ is finite and disjoint from $I_{<_P}(M)$. For each $x \in \Up_{<_P}(b)$, the set $\Inc_P(x) \subseteq \Inc_{<_P}(x)$ is finite, and $\Inc_P(b)$ is cofinite. Thus
\[C := \Inc_P(b) \setminus \bigcup\{\Inc_P(x) : x \in \Up_{<_P}(b)\}\]
is cofinite. For any $a \in C$, comparability and transitivity force $a <_P x$ for every $x \in \Up_{<_P}(b)$, so $P^+(a,b)$. Excluding the finitely many elements that are ${P^+}$-bicomparable with $b$, these comparisons are strict. Hence $b \in U_{<_{P^+}}(M)$, which contradicts $P=P^+$. Now $\Inc_P(a)$ is finite for every $a \in M$. If it is nonempty, then $A_P(a,b)$ holds for a $<_P$-maximal $b \in \Inc_P(a)$, again contradicting $P=P^+$ by Lemma \ref{lem:preoder_by_maximal}(i).
\end{proof}

We will need the following supporting lemma about functions on $\omega$:
\begin{lemma}
\label{lem:not_minimal}
Suppose $N = (\omega, <, f)$ is a first-order structure with $<$ interpreted as the usual order and $f \colon \omega \to \omega$ a function such that $f(n) \geq n$ for all $n \in \omega$. Then $N$ is minimal if and only if $f(n) - n$ is eventually constant.
\end{lemma}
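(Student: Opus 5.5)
We prove the two directions separately. The key point is that the successor function $s(n) = n+1$ is $\emptyset$-definable in $(\omega,<)$ as $s(x) = \min\{y : x < y\}$. Hence everything below is first-order expressible in $N$.

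($\Leftarrow$) Suppose there are $k, N \in \omega$ with $f(n) = n+k$ for all $n \geq N$.
\begin{enumerate}[(i)]
\item Then $f$ is definable in $(\omega,<)$ with parameters. For $n \geq N$ we have $f(n) = s^k(n)$, and the finitely many values $f(0), \ldots, f(N-1)$ are handled by naming finitely many parameters and using a finite case distinction.
\item Therefore every subset of $\omega$ definable in $N$ is already definable with parameters in $(\omega,<)$.
\item We then invoke the standard fact that $(\omega,<)$ is minimal. This follows, for instance, from quantifier elimination for $(\omega,<,s,0)$: atomic formulas in one variable with parameters define finite or cofinite sets, and Boolean combinations of such sets are again finite or cofinite.
\end{enumerate}
So $N$ is minimal.

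($\Rightarrow$) Assume $N$ is minimal and put $g(n) := f(n) - n \geq 0$. We consider the three sets
\[G_> := \{n : f(s(n)) > s(f(n))\},\quad G_= := \{n : f(s(n)) = s(f(n))\},\quad G_< := \{n : f(s(n)) < s(f(n))\}.\]
These say respectively $g(n+1) > g(n)$, $g(n+1) = g(n)$ and $g(n+1) < g(n)$. They are definable and partition $\omega$, so by minimality exactly one of them is cofinite. We treat each case.
\begin{enumerate}[(i)]
\item If $G_=$ is cofinite, then $g$ is eventually constant, which is what we want.
\item If $G_<$ is cofinite, then $g$ is eventually strictly decreasing in $\omega$. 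This is impossible.
\item If $G_>$ is cofinite, then for all $n \geq N$ we have $g(n+1) \geq g(n)+1$, hence $f(n+1) \geq f(n)+2$. In this case we look at the definable set $\im f = \{y : \exists x\, f(x) = y\}$.
\begin{itemize}
\item It is infinite, since $f$ is strictly increasing on $[N,\infty)$.
\item For each $n \geq N$ the nonempty interval $(f(n), f(n+1))$ contains no value $f(m)$ with $m \geq N$, by monotonicity. So these intervals can only meet the finite set $\{f(0),\ldots,f(N-1)\}$.
\item Since there are infinitely many pairwise disjoint such intervals, $\omega \setminus \im f$ is infinite.
\end{itemize}
Thus $\im f$ is infinite and coinfinite, contradicting minimality.
\end{enumerate}

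The argument is short, and the only step needing care is case (iii). There one must choose the witnessing set, here $\im f$, so that it is visibly definable and infinite, and check that the gaps $(f(n), f(n+1))$ for $n \geq N$ avoid all values $f(m)$ with $m \geq N$, not just the neighbouring ones. Monotonicity of $f$ on $[N,\infty)$ settles this.
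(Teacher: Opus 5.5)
Your proof is correct. For $(\Leftarrow)$ you match the paper, which simply calls that direction clear. For $(\Rightarrow)$ you take a genuinely different route.

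The paper works with the level sets $A_m=\{n : f(n)-n=m\}$, one definable set for each $m\in\omega$. If $f(n)-n$ is not eventually constant, no $A_m$ is cofinite, so every $A_m$ is finite and $f(n)-n\to\infty$. The paper then counts: if $f(n)\ge n+k$ for all $n\ge n_0$, then $\im(f)$ meets $[0,n_0+k)$ in at most $n_0$ points. So it omits at least $k$ points there, for every $k$.

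You instead apply minimality once, to the three-piece partition comparing $g(n+1)$ with $g(n)$. This forces $g$ to be eventually monotone, and well-foundedness rules out the decreasing case. In the remaining case $f(n+1)\ge f(n)+2$, and the explicit gaps $(f(n),f(n+1))$ show that $\im f$ is co-infinite.

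What each route gives:
\begin{itemize}
\item Your version uses a fixed finite set of formulas. It yields a stronger intermediate fact: $g$ is eventually either constant or strictly increasing, so $f$ is eventually strictly increasing. This makes the final step a transparent gap argument.
\item The paper's version is shorter and needs no case split. However, it only gets $g\to\infty$ and no monotonicity of $f$, so it must replace your gap argument with the counting estimate on initial segments.
\end{itemize}

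One cosmetic point: you use $N$ both for the structure and for a threshold index, so rename one of them.
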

\begin{proof}
Suppose $N$ is minimal but $f(n) - n$ is not eventually constant. For $m \in \omega$ let $A_m := \{n \in \omega : f(n) - n = m\}$. Each $A_m$ is definable in $N$, but not cofinite, hence finite. Therefore $f(n) - n$ diverges to infinity. Let $k \in \omega$. Then there is $n_0 \in \omega$ such that for all $n \geq n_0$ we have $f(n) \geq n + k$. Hence $\im(f) \cap [0, n_0 + k)$ has at most $n_0$ elements. Since $k$ is arbitrary, $\im(f)$ is infinite but not cofinite, contradicting minimality. The other direction is clear.
\end{proof}

Recall that if $M$ interprets (in dimension $1$) a structure $N$, then $N$ is minimal.

\begin{proposition}
\label{prop:bounded_chains}
Suppose that $R$ is a definable strict order on $M$ that interprets $\omega$ and $P$ is a definable preorder on $M$ with infinite chains. Then there exists $B \in \N$ dependent only on $M$ and $P$ such that for almost all $a \in M$, any $R$-chain in $\Inc_P(a)$ has at most $B$ elements.
\end{proposition}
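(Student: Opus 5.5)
The idea is to push the problem down to the interpreted structure on $\omega$ and apply Lemma \ref{lem:not_minimal}. The key observation is that an $R$-chain is strictly increasing in $\Lev_R$. So an $R$-chain inside a set $X$ has at most as many elements as there are levels between the minimal and maximal level met by $X$. It therefore suffices to show that, for almost all $a$, the set $\Inc_P(a)$ meets only levels in a window $[\Lev_R(a)-k,\Lev_R(a)+k]$, up to finitely many exceptional points.

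First I fix the finiteness picture. The strict part $<_P$ is a definable order with infinite chains, so by Fact \ref{fact:type_fixed} it is of $\TypeOm$ or $\TypemO$. Hence, for all but finitely many $a$, the set $\Inc_{<_P}(a)\supseteq\Inc_P(a)$ is finite. Let $F$ be the finite set of exceptional $b$, i.e. those with $\Inc_P(b)$ cofinite.

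Next I define the relation
\[G(n,m):\iff \exists a\,\exists b\,\bigl(\Lev_R(a)=n\land b\in\Inc_P(a)\land\Lev_R(b)=m\bigr).\]
It is definable in the interpreted structure $(\omega,<,\ldots)$, which is minimal by the remark preceding the proposition. The set $N_0$ of $n$ such that $G(n,\cdot)$ is unbounded is definable. It contains only levels of points of $F$, because levels are finite and every other point has finite $\Inc_P$. So $N_0$ is finite. Put $f(n):=\max\bigl(\{n\}\cup\{m : G(n,m)\}\bigr)$ for $n\notin N_0$ and $f(n):=n$ for $n\in N_0$. Then $f$ is definable in $(\omega,<,\ldots)$ and satisfies $f(n)\ge n$. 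Adding $f$ to the structure preserves minimality, so Lemma \ref{lem:not_minimal} gives $k$ and $n_0$ with $f(n)\le n+k$ for all $n\ge n_0$.

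Now take $a$ outside the following finite sets: the points with cofinite $\Inc_P$, the levels in $N_0$ and below $n_0$, and $\bigcup\{\Inc_P(b): b\notin F,\ \Lev_R(b)<n_0\}$. This last set is a finite union of finite sets, since only finitely many points lie below level $n_0$.

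Let $b\in\Inc_P(a)\setminus F$. If $\Lev_R(b)\ge\Lev_R(a)$, then $\Lev_R(b)\le f(\Lev_R(a))\le\Lev_R(a)+k$. If $\Lev_R(b)<\Lev_R(a)$, then $a\in\Inc_P(b)$ by symmetry of $\parallel_P$. Our choice of $a$ forces $\Lev_R(b)\ge n_0$, so $\Lev_R(a)\le f(\Lev_R(b))\le \Lev_R(b)+k$. Either way $\Inc_P(a)\setminus F$ lies within $2k+1$ consecutive levels. Hence any $R$-chain in $\Inc_P(a)$ has at most
\[B:=2k+1+|F|\]
elements.

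This $B$ depends only on $P$ and $R$. By Proposition \ref{prop:uniqueness}, $R$ is determined up to almost equality by $M$, and changing $R$ on finitely many points only enlarges the finite exceptional set. So $B$ can be taken to depend only on $M$ and $P$.

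The main obstacle is the bookkeeping that makes $f$ a genuine definable total function in the quotient structure despite the finitely many points with cofinite $\Inc_P$. The device is to cut out the definable finite set $N_0$ and treat the points of $F$ separately as an additive constant. The symmetric lower bound via $a\in\Inc_P(b)$ is the other point needing care, since Lemma \ref{lem:not_minimal} only controls upward deviation.
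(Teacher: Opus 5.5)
Your argument matches the paper's proof in its main steps: the same level-function $f$, Lemma~\ref{lem:not_minimal}, symmetry of $\parallel_P$ for the downward direction, and the same bound $B=2k+1+|F|$. The only difference is how the finitely many points with cofinite $\Inc_P$ are handled. The paper passes to a preorder $Q$ in which $F$ becomes a single bottom bicomparability class. Then every $\Inc_Q(x)$ is finite, $f$ needs no special cases, and $|F|$ is added back via $\Inc_P(x)\subseteq\Inc_Q(x)\cup F$. You keep $P$ and instead cut out the finite definable set $N_0$ of levels, setting $f(n)=n$ there.

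That cut-out leaves a gap in your downward case. The step $\Lev_R(a)\le f(\Lev_R(b))$ uses $G(\Lev_R(b),\Lev_R(a))$ together with $f$ being a maximum, which is only valid when $\Lev_R(b)\notin N_0$. Your exclusions give $\Lev_R(a)\notin N_0$ and $\Lev_R(b)\ge n_0$, but not $\Lev_R(b)\notin N_0$. Levels of $N_0$ at or above $n_0$ can occur when $k=0$, because $f(n)-n=0$ on $N_0$.

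Here is a concrete instance. Take $M=\omega\times\{0,1\}$ with $R$ comparing first coordinates. Let $P$ also compare first coordinates, except that $(5,0)$ is $P$-incomparable to everything and $(5,1)$ is $P$-incomparable to level $6$. Then:
\begin{itemize}
\item $F=\{(5,0)\}$ and $N_0=\{5\}$;
\item $f(n)=n$ for all $n\ge 5$, so $k=0$ and $n_0=5$ are legitimate outputs of the lemma;
\item $a=(6,0)$ survives all your exclusions;
\item $b=(5,1)\in\Inc_P(a)\setminus F$, yet $\Lev_R(a)=6>5=f(5)$.
\end{itemize}
So $\Inc_P(a)\setminus F$ does not lie in $[\Lev_R(a)-k,\Lev_R(a)+k]$, contrary to your claim.

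The repair is one line. Since $f(n)=n+k$ persists for all larger $n$, you may choose $n_0>\max N_0$. Alternatively, add the finite set $\bigcup\{\Inc_P(b): b\notin F,\ \Lev_R(b)\in N_0\}$ to your exclusions. With either change your proof is complete, including your remark on independence from $R$. The paper's $Q$ device avoids the issue entirely, because there $f$ records every incomparability at every level.
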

\begin{proof}
Let $F:=I_{<_P}(M)$, a finite set. Let $Q$ be a definable preorder that agrees with $P$ on $M\setminus F$, with $F$ (if nonempty) forming one bicomparability class placed before $M \setminus F$. Then all sets $\Inc_Q(x)$ are finite, and for $x \notin F$ we have $\Inc_P(x) \subseteq \Inc_Q(x) \cup F$. Define $f \colon \omega \to \omega$ by setting $f(n)$ to be the largest $m \geq n$ such that $n = m$ or there exist $x, y\in M$ with $\Lev_R(x) = n$, $\Lev_R(y) = m$, and $y \in \Inc_Q(x)$. This maximum exists because the levels and the sets $\Inc_Q(x)$ are finite. The structure $(\omega, <, f)$ is interpreted in $M$ and hence minimal. By Lemma \ref{lem:not_minimal}, there are $k, n_0 \in \omega$ such that $f(n) = n + k$ for all $n \geq n_0$.

Put $D := \{y \in M : \Lev_R(y) < n_0\}$. Outside of the finite set $F \cup D \cup \bigcup_{y \in D} \Inc_Q(y)$, every $x$ and every $y \in \Inc_Q(x)$ have levels at least $n_0$. By symmetry of incomparability we also have $|\Lev_R(y) - \Lev_R(x)| \leq k$. Hence any $R$-chain in $\Inc_Q(x)$ has at most $2k + 1$ elements, and $B := 2k + 1 + |F|$ works for $\Inc_P(x)$. Any two definable strict orders interpreting $\omega$ are almost equal, so their sufficiently high levels agree up to a constant difference. Since every $\Inc_Q(x)$ is finite, the obtained value of $k$ does not depend on this difference. Hence $B$ does not depend on the choice of $R$.
\end{proof}

We now iterate the operation $P \mapsto P^+$ to build a chain of definable preorders. Suppose $P$ is a definable preorder. Set $P_0 := P$ and $P_{i+1} = (P_i)^+$ for all $i \in \N$.

\begin{theorem}
\label{thm:characterization2}
Suppose $P = P_0$ has infinite chains. The following are equivalent:
\begin{enumerate}[(i)]
\item $P_{i+1} = P_i$ for some $i \in \omega$.
\item $M$ interprets $\omega$.
\end{enumerate}
\end{theorem}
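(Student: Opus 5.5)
The direction (i)$\Rightarrow$(ii) is short. By Lemma \ref{lem:preoder_by_maximal}(ii), each $<_{P_{j+1}}$ extends $<_{P_j}$. So every $P_j$ has infinite chains, since $P_0$ does. If $P_{i+1}=P_i$, then $P_i=(P_i)^+$, and Lemma \ref{lem:preorder_stab} shows that $P_i$ is a definable linear preorder with infinite chains. By Fact \ref{fact:order_preorder}, $M$ interprets $\omega$.

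For (ii)$\Rightarrow$(i), fix a definable strict order $R$ interpreting $\omega$. First I would record three facts about the iteration.
\begin{enumerate}[(a)]
\item The relations $P_j$ increase as sets of pairs. Hence the sets $\Inc_{P_j}(a)$ decrease in $j$.
\item Each $<_{P_j}$ has infinite ascending chains, because it extends $<_P$, which we may take to extend $<$. By Facts \ref{fact:chains_type} and \ref{fact:type_fixed}, $<_{P_j}$ is of $\TypeOm$. Proposition \ref{prop:omega_extends}, in its remarked general form, then gives that $R$ extends $<_{P_j}$ on a cofinite set.
\item By Proposition \ref{prop:bounded_chains} applied to $P_0$, there is a single $B$ such that for almost all $a$, every $R$-chain in $\Inc_{P_0}(a)$, and hence in every $\Inc_{P_j}(a)$, has at most $B$ elements.
\end{enumerate}
Combining (b) and (c), $<_{P_j}$-chains inside $\Inc_{P_j}(a)$ are almost everywhere of length at most $B$, uniformly in $j$.

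Next I would introduce a definable numerical invariant. Let $h_j(a)$ be the maximal length of a $<_{P_j}$-chain in $\Inc_{P_j}(a)$. The sets $\{a : h_j(a)=m\}$ are definable, so by minimality exactly one of them is cofinite. Call the corresponding value $h_j\le B$ the generic height. The plan is to show that $h_{j+1}<h_j$ whenever $h_j>0$. The idea is that passing to $P^+$ makes $x$ comparable (from below, by Lemma \ref{lem:preoder_by_maximal}(i)) with every $<_{P_j}$-maximal element of $\Inc_{P_j}(x)$. Symmetrically, $x$ becomes comparable from above with every $y$ such that $x$ is maximal in $\Inc_{P_j}(y)$. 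For generic $x$, I expect this to remove the top layer of every maximal chain in $\Inc_{P_j}(x)$.

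This is the step I expect to be the main obstacle. Removing maximal elements of $\Inc_{P_j}(x)$ does not obviously lower the height of what remains, since non-maximal elements stay incomparable with $x$. I would first try to exploit the symmetry of incomparability together with the uniform bound $B$. The aim would be to show that a generic chain in $\Inc_{P_{j+1}}(x)$ whose top is not maximal in $\Inc_{P_j}(x)$ forces a longer chain in some $\Inc_{P_j}(y)$ with $y$ generic, contradicting maximality of $h_j$. If strict decrease fails, I would fall back to a weaker goal: within at most $B+1$ steps the generic incomparability sets become $P$-antichains. That would make $P_j$ almost equal to the preorder of $R$ after boundedly many steps.

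Finally I would handle the finitely many exceptional elements to get literal equality $P_{i+1}=P_i$. Once $P_j$ is almost linear, every $\Inc_{P_j}(x)$ lies inside a finite set $F$ together with the finite $R$-level of $x$. The relations $P_j$ then increase within a set of pairs that is controlled up to finitely many exceptions. In fact only the finitely many pairs touching $F$, together with the finitely many incomparabilities left in the levels, can still change. An increasing sequence of relations that differ only on a finite set of pairs must stabilize, which gives (i).
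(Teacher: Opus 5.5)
\textbf{There is a genuine gap in (ii)$\Rightarrow$(i).} Your direction (i)$\Rightarrow$(ii) is fine, and facts (a)--(c) are the right ingredients. But the central step of (ii)$\Rightarrow$(i) is never proved, and the invariant you chose is what makes it look hard.

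You measure $h_j(x)$ with $<_{P_j}$, an order that changes with $j$. The obstacle you name is not the real one. Removing all maximal elements of a finite poset always lowers its height, because the top of any surviving chain extends to a longer chain. The actual problem is that $<_{P_{j+1}}$ adds new comparabilities among elements that stay in $\Inc_{P_{j+1}}(x)$. For example, $y<_{P_{j+1}}z$ whenever $z$ is $<_{P_j}$-maximal in $\Inc_{P_j}(y)$. So there is no reason for $h_{j+1}<h_j$, and neither the symmetry idea nor the fallback is carried out.

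The missing idea is to measure height with the fixed order $R$. Take $x$ with $\Inc_{P_j}(x)$ contained in the cofinite set $C_j$ on which $R$ extends $<_{P_j}$. Then every $R$-maximal element of $\Inc_{P_j}(x)$ is $<_{P_j}$-maximal there. By Lemma \ref{lem:preoder_by_maximal}(i), it becomes $P_{j+1}$-comparable with $x$. Hence every $R$-chain in $\Inc_{P_{j+1}}(x)$ extends to a longer $R$-chain in $\Inc_{P_j}(x)$. So the longest $R$-chain drops by at least one per step. Combined with your uniform bound (c), after $B$ steps $\Inc_{P_j}(x)=\emptyset$ for almost all $x$. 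Only finitely many steps are involved, so finitely many exceptional sets suffice. This is the paper's argument.

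Three further points need repair.

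\begin{enumerate}[(1)]
\item You need ``$\Inc_{P_j}(x)\subseteq C_j$ for almost all $x$'', and you also use it implicitly when combining (b) and (c). This requires every $\Inc_{P_j}(z)$ to be finite, i.e.\ $I_{<_{P_j}}(M)=\emptyset$. Otherwise a point $z\in I_{<_{P_j}}(M)\setminus C_j$ lies in almost every $\Inc_{P_j}(x)$. Such a $z$ can sit $<_{P_j}$-above an $R$-maximal element without being $R$-above it, and then that element is not removed. The paper handles this by first running $|I_{<_{P_0}}(M)|$ steps. By the argument of Lemma \ref{lem:preorder_stab}, a $<_{P_i}$-maximal element of $I_{<_{P_i}}(M)$ lands in $U_{<_{P_{i+1}}}(M)$, so this set empties out.

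\item $P$ is arbitrary, so you cannot assume $<_P$ extends $<$. By Fact \ref{fact:type_fixed}, either all $<_{P_j}$ are of $\TypeOm$ or all are of $\TypemO$. In the second case, run the same argument with the reverse of $R$.

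\item For stabilization you need $\Inc_{P_j}(x)=\emptyset$ for all $x$ outside a finite set $F$. Containment in the $R$-level of $x$ is not enough: infinitely many surviving incomparabilities inside levels would break your finiteness argument. Once the generic sets are empty, symmetry puts all incomparable pairs in $F\times F$. Since $P^+$ only adds pairs between $P$-incomparable elements, the sequence then stabilizes.
\end{enumerate}
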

\begin{proof}
(i) $\Rightarrow$ (ii) follows from Lemma \ref{lem:preorder_stab} and Fact \ref{fact:order_preorder}. For (ii) $\Rightarrow$ (i), let $R$ be a definable strict order on $M$ that interprets $\omega$. Let $I_i:=I_{<_{P_i}}(M)$. Since $<_{P_{i+1}}$ extends $<_{P_{i}}$, we have $I_{i+1}\subseteq I_i$. The argument in the proof of Lemma \ref{lem:preorder_stab} shows that if $I_i$ is nonempty, then any $<_{P_i}$-maximal element of $I_i$ belongs to $U_{<_{P_{i+1}}}(M)$. Hence after at most $t = |I_0|$ steps we get $I_t=\emptyset$. So for all $x \in M$ and $i \geq t$, the sets $\Inc_{P_i}(x)$ are all finite.

By Fact \ref{fact:type_fixed}, either all the orders $<_{P_i}$ are of $\TypeOm$, or all of them are of $\TypemO$.\\
\emph{Case 1: } All orders $<_{P_i}$ are of $\TypeOm$. For any $S = P_i$ with $i \geq t$, by Proposition \ref{prop:omega_extends} there exists a cofinite $C \subseteq M$ on which $R$ extends $<_S$. After removing the finite set $\bigcup_{z \notin C} \Inc_S(z)$ from $C$, we can find a cofinite $C' \subseteq C$ such that $\Inc_S(x)\subseteq C$ for every $x \in C'$. If $\Inc_S(x)$ is nonempty, each of its $R$-maximal elements is $<_S$-maximal, and becomes $S^+$-comparable with $x$ by Lemma \ref{lem:preoder_by_maximal}(i). Hence every $R$-chain in $\Inc_{S^+}(x)$ is strictly shorter than the longest $R$-chain in $\Inc_{S}(x)$. By Proposition \ref{prop:bounded_chains}, there is $B \in \N$ bounding the lengths of $R$-chains in $\Inc_{P_t}(x)$ for almost all $x$. Hence we get $\Inc_{P_{t+B}}(x) = \emptyset$ for almost all $x$. All remaining pairs of incomparable elements belong to some finite $F \times F$, so subsequent iterations can only add finitely more pairs, and the sequence stabilizes.\\
\emph{Case 2}: All orders $<_{P_i}$ are of $\TypemO$. Repeat the argument for Case 1, replacing $R$ with its reverese (taking a cofinite subset $C \subseteq M$ such that $R$ extends the reverse of $<_S$).
\end{proof}

In particular, if $M$ interprets $\omega$, we can apply Theorem \ref{thm:characterization2} to the preorder $P_0 := \leq$, the reflexive closure of the ambient order $<$, and obtain a linear preorder $P_i$. Its strict part has $\TypeOm$ because it extends $<$, and its normalization recovers the (unique up to almost equality) strict order that interprets $\omega$.

\begin{corollary}
\label{cor:order_interprets_order}
If $(M, <, \ldots)$ interprets a linear order by a definable strict order $R$, then $R$ is $<$-definable.
\end{corollary}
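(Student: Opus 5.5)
By Lemma \ref{lem:limits}, the linear order interpreted by $R$ is either $(\omega,<)$ or $(\omega+\omega^*,<)$. The plan is to handle these two cases separately, and in each case show that $R$ is almost equal to a relation definable from $<$ alone. Almost equality suffices: as noted after Definition \ref{def:almost_equal}, the symmetric difference of two almost equal definable relations is definable in the language of equality. Concretely, it is a finite Boolean combination of finite sets of pairs and sets of the form $\{a\}\times M$ and $M\times\{a\}$. So $R$ is definable from anything almost equal to it, using parameters.

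\emph{Case $\omega$.} By Lemma \ref{lem:limits}, $(M,<)$ is $\TypeOm$ or $\TypemO$. In the $\TypemO$ case, replace $<$ by its reverse, which is $<$-definable. Now apply Theorem \ref{thm:characterization2} to $P_0:=\leq$, which has infinite chains. Since $M$ interprets $\omega$, the sequence $P_i$ stabilizes at some finite stage $i$. Each $P_{j+1}$ is defined from $P_j$ by a fixed first-order formula, so $P_i$ is $<$-definable. By Lemma \ref{lem:preorder_stab}, $P_i$ is linear. Its strict part $<_{P_i}$ extends $<$, and it is of $\TypeOm$ by Fact \ref{fact:type_fixed} together with this extension. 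The normalization $S$ of $<_{P_i}$ is almost equal to $<_{P_i}$, so $S$ is $<$-definable with parameters. Since $\parallel_{P_i}$ is an equivalence relation with finite classes, $S$ interprets $\omega$. By Proposition \ref{prop:uniqueness}, $R$ and $S$ are almost equal, hence $R$ is $<$-definable.

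\emph{Case $\omega+\omega^*$.} This is the step I expect to be the main obstacle, because the paper's machinery so far is only for $\TypeOm$. I would reduce to the previous case by splitting $M$. Here $(M,<)$ is $\TypeOmO$, and $L_<(M)$ and $U_<(M)$ partition $M$ up to a finite set. I would show that each of them is almost equal to a $<$-definable set, for instance $\{a : \Up_<(a)\text{ is cofinite}\}$ expressed uniformly. One candidate is the set of $a$ such that $a<b$ for some $b$ in a fixed infinite descending part; alternatively, fix a parameter $c$ deep in the middle and use $\Low_<(c)$ to capture $L$ up to finitely many points. Similarly, the two halves $L_R(M)$ and $U_R(M)$ should agree with $L_<(M)$ and $U_<(M)$ up to finitely many elements, possibly after reversing $R$. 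This uses Fact \ref{fact:chains_type}, since ascending $R$-chains must be almost ascending $<$-chains, by an argument in the style of Proposition \ref{prop:omega_extends}.

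On each half, the induced structure is minimal, since the half is an infinite definable subset, and it is of $\TypeOm$ (respectively $\TypemO$). The restriction of $R$ to the half interprets $\omega$ (respectively $\omega^*$). The first case then gives that $R$ restricted to each half is almost equal to a $<$-definable order. Across the two halves, $R$ puts every element of the $\omega$-part below every element of the $\omega^*$-part, which is definable once the halves are. Gluing these pieces, $R$ is almost equal to a $<$-definable relation, and the opening remark finishes the proof.

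The delicate points are the $<$-definability of the split and the verification that $R$'s halves match $<$'s halves almost everywhere.
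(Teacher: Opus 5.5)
Your Case $\omega$ is exactly the paper's argument, namely the paragraph preceding the corollary. You start from $P_0={\leq}$ and iterate $P\mapsto P^+$; the iteration stabilizes by Theorem~\ref{thm:characterization2} and the result is linear by Lemma~\ref{lem:preorder_stab}. You then normalize the strict part and conclude with Proposition~\ref{prop:uniqueness} and interdefinability of almost equal relations. The corollary sits under the standing hypothesis of Section~3 that $(M,<)$ is minimal of $\TypeOm$. By Lemma~\ref{lem:limits} this is the only case that occurs, so your proof is complete for the statement as placed in the paper. (The remaining trivialities are reversing $R$ if it interprets $\omega^*$, and noting that $R$ is almost empty if the interpreted order is finite.)

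Your Case $\omega+\omega^*$ is therefore unnecessary, and as sketched it cannot work. In a minimal structure every definable set is finite or cofinite. For $\TypeOmO$, however, $L_<(M)$ and $U_<(M)$ are both infinite and co-infinite, so neither is almost equal to any definable set, with or without parameters. Your concrete candidates confirm this:
\begin{itemize}
\item $\Low_<(c)$ is finite for $c\in L_<(M)\cup I_<(M)$.
\item $\Low_<(c)$ is cofinite, hence contains almost all of $U_<(M)$, for $c\in U_<(M)$.
\item There is no ``middle'' parameter, and ``$\Up_<(a)$ is cofinite'' is not uniformly first-order expressible.
\end{itemize}
Consequently the halves are not definable, your claim that they carry a minimal induced structure ``since the half is an infinite definable subset'' fails, and the gluing step (``definable once the halves are'') has nothing to glue.

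If you intended the corollary for arbitrary minimal ordered structures, this is a genuine gap, and the splitting strategy cannot be repaired. The paper never claims that version. Its only tool for $\TypeOmO$ (Section~5, Proposition~\ref{prop:interpretation_M_LM}) avoids splitting altogether. It uses stable embeddedness of $(L(M),<)$ in the pure order language (Theorem~\ref{thm:stable_embedding}). It then pushes a definable property true of all $x\in L(M)$ to a single $a\in U(M)$ by minimality, so that the property holds on the cofinite set $\Low_<(a)$.
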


We finish the section with a characterization of $R$-incomparability: a weak dual to Proposition \ref{prop:omega_extends}.
\begin{proposition}
Suppose $R$ is a definable strict order on $M$ that interprets $\omega$. Then up to almost equality $\parallel_{R}$ is the coarsest definable equivalence relation on $M$ with infinitely many classes.
\end{proposition}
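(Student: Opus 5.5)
We have to show that every definable equivalence relation $E$ on $M$ with infinitely many classes is, up to almost equality, contained in $\parallel_R$. Put differently, for almost all $a \in M$ we want $a/E \subseteq \Inc_R(a)$ on a cofinite set.

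The plan is to turn $E$ into a definable strict order and then apply Proposition \ref{prop:omega_extends}. Define
\[S(x,y) :\iff \exists x' \exists y' \left( x' \mathrel{E} x \land y' \mathrel{E} y \land R(x',y') \land \lnot (x \mathrel{E} y) \right),\]
so that $S$ says some element of the class of $x$ lies $R$-below some element of the class of $y$. This $S$ need not be transitive. To fix that, I would instead work with the preorder defined through levels of $E$-classes:
\[x \preceq y \iff \max \Lev_R(x/E) \le \max \Lev_R(y/E).\]
This is definable. By Lemma \ref{lem:eq_antichain} the $E$-classes are finite, so the maximum exists. The relation ``$\Lev_R(u) \le \Lev_R(v)$'' is definable as $R(u,v) \lor u \parallel_R v$.

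The preorder $\preceq$ is linear, and its strict part is a definable strict order whose incomparability relation $\sim_\preceq$ is an equivalence relation. Because $E$ has infinitely many finite classes, the quantity $\max \Lev_R(x/E)$ is unbounded. Hence the strict part $<_\preceq$ has infinite chains, and by Fact \ref{fact:order_preorder} it interprets $\omega$. By Proposition \ref{prop:uniqueness}, $<_\preceq$ is almost equal to $R$. So on some cofinite set $C$, the relation $\sim_\preceq$ agrees with $\parallel_R$.

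Every $E$-class $a/E$ is contained in $a/\sim_\preceq$, since all elements of the class share the same max level. Therefore, for $a,b \in C$, $a \mathrel{E} b$ implies $a \parallel_R b$. That is, $E$ refines $\parallel_R$ up to almost equality. Finally, $\parallel_R$ is itself a definable equivalence relation with infinitely many classes, since its classes are the levels. This gives coarsest-ness.

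The main obstacle I expect is the appeal to Proposition \ref{prop:uniqueness}. It needs $<_\preceq$ to interpret $\omega$, that is, to have order type $\omega$ on its quotient rather than some other order type. The quotient is a subset of $\omega$ under the usual order, hence infinite of type $\omega$, so this should be fine. I would also double-check that Proposition \ref{prop:uniqueness}'s notion of almost equality transfers to equality of the incomparability relations on a cofinite set. It does, since $\parallel$ is defined pointwise from the order.
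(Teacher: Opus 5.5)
Your argument is correct, but it takes a longer route than the paper. You build from $E$ the linear preorder $x \preceq y \iff \max\Lev_R(x/E) \le \max\Lev_R(y/E)$ and check that its strict part interprets $\omega$, since the quotient embeds as an infinite subset of $\omega$. You then invoke Proposition~\ref{prop:uniqueness} to get that $<_\preceq$ is almost equal to $R$, and transfer $E \subseteq {\sim_\preceq}$ to $E \subseteq {\parallel_R}$ on a cofinite set. Each step checks out. Definability of $\preceq$ via $R(u,v) \lor u \parallel_R v$, and finiteness of the $E$-classes from Lemma~\ref{lem:eq_antichain}, are exactly what is needed. The auxiliary relation $S$ you start with is never used and can be dropped.

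The paper instead applies Lemma~\ref{lem:eq_antichain} directly, with $R$ in place of $<$. The set of elements that are $R$-maximal in their own $E$-class is definable and meets every class, hence is cofinite. So all but finitely many $E$-classes are $R$-antichains, i.e.\ $a/E \subseteq \Inc_R(a)$ for almost all $a$, which is already the claim.

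Proposition~\ref{prop:uniqueness} is itself derived, through Proposition~\ref{prop:omega_extends}, from that same lemma. Your proof therefore routes through a heavier consequence of an ingredient that suffices on its own. The paper's one-liner uses nothing about $R$ beyond its being a definable strict order for the containment. The hypothesis that $R$ interprets $\omega$ only enters to know that $\parallel_R$ is itself an equivalence relation with infinitely many classes.

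What your version buys is a conceptual explanation of the ``weak duality'' with Proposition~\ref{prop:omega_extends}. Every such $E$ lies inside the bicomparability relation of some definable $\omega$-interpreting preorder. So coarsest-ness of $\parallel_R$ appears as a shadow of the uniqueness of $R$.
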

\begin{proof}
Directly from Lemma \ref{lem:eq_antichain}: if a definable equivalence relation $E$ has infinitely many classes, then for almost all $a \in M$ we have $a/E \subseteq \Inc_R(a)$.
\end{proof}

\section{Example: orders on $(\omega,\leq)$}\label{sec:omega}

In this section we illustrate the characterizations from the previous section in the most basic structure of $\TypeOm$, namely $(\omega, <)$. We precisely describe all $\TypeOm$ orders definable in $(\omega, \leq)$ up to almost equality. We will work with the reflexive closure $\leq$ rather than $<$. Let $\suc \colon \omega \to \omega, \suc(n) := n+1$ be the successor function.
\begin{fact}
$(\omega, \leq, \suc, 0)$ has quantifier elimination.
\end{fact}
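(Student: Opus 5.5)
Quantifier elimination for $(\omega, \leq, \suc, 0)$ follows from the standard back-and-forth test: I would show that the theory $T$ of this structure has quantifier elimination by checking that any primitive existential formula $\exists x\, \bigwedge_i \theta_i(x,\bar y)$ is equivalent modulo $T$ to a quantifier-free formula. Every term is $\suc^k(t)$ with $t$ a variable or $0$. So every atomic formula involving $x$ has one of two forms. It is either $\suc^k(x) = \suc^l(t)$, $\suc^k(x) \leq \suc^l(t)$, or the reverse inequality, with $t$ a parameter variable or $0$. Or it is a formula involving $x$ on both sides, which reduces to a quantifier-free condition on $k,l$ alone; for instance, $\suc^k(x) \leq \suc^l(x)$ iff $k \leq l$, using that $\suc$ is injective and strictly increasing.

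The first step is to normalise the literals. Negated atomic formulas can be eliminated: $\lnot(u \leq v)$ becomes $\suc(v) \leq u$, and $u \neq v$ becomes $\suc(u) \leq v \lor \suc(v) \leq u$. Disjunctions then pull out of the existential quantifier. So each conjunct becomes an equation or inequality between $\suc^k(x)$ and $\suc^l(t)$.

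The second step is to solve for $x$. An equation $\suc^k(x) = \suc^l(t)$ is equivalent to $\suc^k(0) \leq \suc^l(t)$ together with $x$ equal to the "$(l-k)$-th shift" of $t$. When $l \geq k$ this is simply $x = \suc^{l-k}(t)$. When $l < k$ it requires $t \geq k-l$ and $x$ equal to a predecessor of $t$. To avoid predecessors, I would multiply through: replace every literal $\theta_i$ by the equivalent literal obtained by applying $\suc^N$ to both sides, for $N$ large. Then the substitution $x \mapsto$ (term) can be carried out inside the other literals after raising them by $\suc^k$, again using that $\suc$ is an order-embedding. This gives the existence of $x$ with $\suc^k(x) = s$ as the quantifier-free condition $\suc^k(0) \leq s$, with the remaining literals rewritten quantifier-freely in terms of $s$.

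The third step handles the case with only inequalities, which is the main obstacle. Here the condition is that $x$ lies in an interval with lower bounds of the form $\suc^l(t)\leq\suc^k(x)$ and upper bounds of the form $\suc^k(x) \leq \suc^{l'}(t')$. Bringing all literals to a common coefficient $\suc^K(x)$ by applying $\suc$ (order-embedding), the existence of $x$ reduces to two things. First, every lower bound $a$ must be $\leq$ every upper bound $b$. Second, the interval $[\max(a,\suc^K(0)), \min b]$ must contain an element of $\im(\suc^K)$, i.e. a number $\geq K$. Since $\im(\suc^K)$ is cofinite, namely all $n \geq K$, this is just the quantifier-free conjunction of $a \leq b$ for all pairs together with $\suc^K(0) \leq b$ for all upper bounds $b$. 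If there are no upper bounds, $x$ always exists. This delicate point — that $\im(\suc^K)$ has no gaps beyond $K$, so no congruence conditions arise (unlike Presburger arithmetic) — is what makes elimination work without extra predicates. Combining the cases yields a quantifier-free equivalent, proving the fact.
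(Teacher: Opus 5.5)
Your argument is correct. The paper gives no proof of this fact: it quotes it as a standard result, used only to conclude that a definable relation on $(\omega,\leq)$ eventually depends only on $y-x$. So your proposal supplies a verification the paper omits rather than following or departing from one of its arguments.

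Your first sentence mislabels the method. What you actually use is the usual syntactic criterion, eliminating one existential quantifier in front of a conjunction of literals, not a back-and-forth test. The execution is sound:
\begin{enumerate}
\item Rewriting the negated literals is valid because the order is discrete and $\suc$ is the immediate successor.
\item The equation case works once you raise every other literal by $\suc^k$ and replace $\suc^{m+k}(x)$ by $\suc^m(s)$. Your remark about taking ``$N$ large'' is unnecessary.
\item The inequality case reduces correctly to $\bigwedge a\leq b \wedge \bigwedge \suc^K(0)\leq b$. This holds because $\im(\suc^K)=\{n : n\geq K\}$, and $z=\min b$ is a witness whenever upper bounds exist.
\end{enumerate}

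A merit of your route is that every equivalence you use is checked directly in $\omega$ and is a first-order sentence true there. You therefore never need an axiomatisation of $\mathrm{Th}(\omega,\leq,\suc,0)$ or a description of its nonstandard models, which a semantic proof (back-and-forth or saturation) would require. Because the constant $0$ is in the language, sentences also reduce to quantifier-free ones, so the elimination is complete.
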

We note that $\suc$ is definable in $(\omega, \leq)$. For $H \subseteq \N$, define
\[x \leq_H y \iff x \leq y \land y - x \in H.\]
When $H$ is cofinite, $\leq_H$ is definable in $(\omega,\leq)$.
\begin{fact}
\label{fact:submonoid}
$\leq_H$ is a partial order if and only if $H \subseteq (\N, +)$ is a submonoid.
\end{fact}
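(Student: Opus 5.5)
The plan is to check the three axioms of a (non-strict) partial order for $\leq_H$ separately; recall that Section \ref{sec:omega} explicitly works with reflexive orders. Each axiom should translate into exactly one condition on $H$ or be automatic, and together these conditions are precisely the definition of a submonoid of $(\N,+)$: it contains $0$ and is closed under addition.

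\emph{Reflexivity.} We have $x \leq_H x$ if and only if $x - x = 0 \in H$. So $\leq_H$ is reflexive (for any, equivalently all, $x$) if and only if $0 \in H$.

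\emph{Antisymmetry.} This holds unconditionally. If $x \leq_H y$ and $y \leq_H x$, then $x \leq y$ and $y \leq x$, so $x = y$ by antisymmetry of $\leq$. It therefore imposes no constraint on $H$.

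\emph{Transitivity.} I will show that it is equivalent to closure of $H$ under addition.
\begin{itemize}
\item[] If $H + H \subseteq H$, suppose $x \leq_H y$ and $y \leq_H z$. Then $x \leq z$, and $z - x = (z-y) + (y-x)$ is a sum of two elements of $H$, hence lies in $H$. So $x \leq_H z$.
\item[] Conversely, assume transitivity and take $a, b \in H$. Use the witnesses $x = 0$, $y = a$, $z = a+b$. Then $0 \leq_H a$ and $a \leq_H a+b$, so transitivity gives $0 \leq_H a+b$, that is, $a + b \in H$.
\end{itemize}

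Combining the three items gives the equivalence. No step is a real obstacle. The only point needing care is the interpretation of ``partial order'' as reflexive here rather than strict. Under the strict reading one would instead need $0 \notin H$ and closure under addition, i.e.\ a subsemigroup of $\N_{>0}$. I will follow the section's stated convention, under which the submonoid condition is exactly right.
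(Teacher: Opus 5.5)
Your proof is correct and follows the paper's argument: reflexivity is equivalent to $0 \in H$, and transitivity is equivalent to the condition $b-a \in H \land c-b \in H \Rightarrow c-a \in H$, i.e.\ to closure of $H$ under addition. You also spell out two points the paper leaves implicit: antisymmetry is inherited from $\leq$, and the witnesses $0 \leq a \leq a+b$ give the converse direction of the transitivity equivalence.
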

\begin{proof}
Reflexivity is equivalent to $0 \in H$. For transitivity, $a \leq_H b \land b \leq_H c \Rightarrow a \leq_H c$ is equivalent to $b - a \in H \land c - b \in H \Rightarrow c - a \in H$.
\end{proof}

\begin{lemma}
Let $R$ be an order definable in $(\omega, \leq)$ with infinite increasing chains (so that its strict part is $\TypeOm$). Then there is a cofinite submonoid $H \subseteq (\N, +)$ such that $R$ is almost equal to $\leq_H$.
\end{lemma}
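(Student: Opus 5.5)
The plan is to use quantifier elimination to reduce $R$, on a final segment of $\omega$, to a condition on the difference $y - x$ alone. Then I would use transitivity and the existence of infinite chains to show that the admissible differences form a cofinite submonoid of $(\N,+)$.

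\textbf{Step 1 (reduction to differences).} Write $R(x,y)$ as a formula with parameters $c_1,\dots,c_r \in \omega$. By quantifier elimination for $(\omega,\leq,\suc,0)$, it is equivalent to a Boolean combination of atomic formulas. These have the forms $x+k \leq y+l$, $x+k = y+l$, $x+k \leq c_j+l$, $x + k = c_j + l$, $y+k\leq c_j+l$, $y + k = c_j + l$, and similar forms with $0$ in place of $c_j$. Choose $N$ larger than all $c_j$ plus all offsets occurring in the formula. For $x,y\geq N$, every atomic formula mentioning only one of the variables has a fixed truth value. Hence there is a set $D\subseteq \mathbb{Z}$ such that
\[R(x,y) \iff y-x \in D \quad \text{for all } x,y\geq N.\]
Moreover, $D$ is a Boolean combination of rays $\{d : d\geq m\}$ and singletons $\{m\}$. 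In particular, each of $D\cap \N$ and $D\cap(-\N)$ is finite or cofinite in its half-line. Since $\{x : x \geq N\}$ is cofinite, $R$ is almost equal to the relation $y-x\in D$.

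\textbf{Step 2 ($D\subseteq \N$).} Reflexivity gives $0\in D$. Suppose some $d>0$ has $-d\in D$. Then for all $x\geq N$, $R(x+d,x)$ holds, so $d\in D$ by the same reduction on large elements, or more directly by the following argument. Transitivity on large elements chains $x, x+d, x+2d,\dots$. If both $d$ and $-d$ lie in $D$, then $x\sim_R x+d\sim_R x+2d\sim_R\cdots$, giving an infinite, hence cofinite, bicomparability class. Such a class meets an infinite strict chain in two points, which is impossible. So whenever $-d \in D$ with $d>0$, we have $d \notin D$. Then $R(x+d,x)$ is strict, i.e.\ $x+d <_R x$, for all large $x$. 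The sequence $N+kd$ for $k=0,1,2,\dots$ would then be an infinite descending $<_R$-chain. This contradicts the fact that the strict part of $R$ is $\TypeOm$ (Fact \ref{fact:chains_type}, in its normalized form).

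Alternatively, Step 2 can be routed through Proposition \ref{prop:uniqueness}: $<$ interprets $\omega$, so it is the largest $\TypeOm$ order up to almost equality. Hence $<_R$ is almost contained in $<$, which forces $D\cap(-\N)=\emptyset$ up to the bicomparability argument above. Either route gives $D\subseteq\N$.

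\textbf{Step 3 (submonoid and cofiniteness).} Put $H:=D$. For $a,b\in H$ and $x\geq N$, we have $R(x,x+a)$ and $R(x+a,x+a+b)$. Transitivity gives $R(x,x+a+b)$, so $a+b\in H$. With $0\in H$, this makes $H$ a submonoid, and $R$ agrees with $\leq_H$ on $[N,\infty)$.

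If $H=\{0\}$, then $R$ restricted to $[N,\infty)$ is equality. Every $R$-chain would then meet the cofinite set $[N,\infty)$ in at most one point, so all chains would have length at most $N+1$, contradicting the existence of infinite chains. Hence $H$ contains some $d>0$, and so $H\supseteq d\N$ is infinite. Since $D\cap\N$ is finite or cofinite by Step 1, $H$ is cofinite.

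The main obstacle I expect is the bookkeeping in Step 1. One must check that after discarding finitely many elements, every atomic formula reduces either to a constant truth value or to a condition on $y-x$. This is where the absence of congruence predicates in $(\omega,\leq)$ (unlike in Presburger arithmetic) matters: it is what makes $D$ finite or cofinite rather than merely eventually periodic.
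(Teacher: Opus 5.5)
Your proposal is correct and follows essentially the paper's argument. Quantifier elimination reduces $R$ on a final segment to a condition $y-x\in D$; negative differences are excluded because they would yield an infinite descending chain contradicting $\TypeOm$; and transitivity makes $H$ a submonoid. Your Step 3 also supplies the cofiniteness argument that the paper dismisses as ``clearly'': $D\cap\N$ is finite or cofinite by the shape of $D$, and $H\neq\{0\}$ is forced by infinite chains.

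The only blemish is the stray clause in Step 2, ``so $d\in D$ by the same reduction on large elements''. It is unjustified and contradicts what you go on to prove, namely $d\notin D$, so just delete it.
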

\begin{proof}
By quantifier elimination in $(\omega, \leq, \suc, 0)$, $R(x,y)$ eventually depends only on the difference $y-x$. There is no $d > 0$ such that $R(x+d, x)$ holds for cofinitely many $x \in \omega$: otherwise there would exist an infinite descending $R$-chain, contradicting that $R$ is $\TypeOm$. Let $H := \{d \in \N: R(x, x+d) \text{ for almost all } x\}$. Then $R$ agrees with $\leq_H$ on a cofinite set. By Fact \ref{fact:submonoid}, $H$ is a submonoid of $(\N,+)$. It is clearly cofinite.
\end{proof}

\section{Interpretability in $n$ dimensions}\label{sec:n_to_one}

In this section we show equivalences between natural generalizations as well as specializations of the main question. A large part of the necessary work and some of the equivalences were already presented in \cite{Jag14}. The results of this section will make the picture complete. The results of the previous sections also allow us to shorten some arguments from the previous work. As a consequence, this section will be largely self-contained except for the following result:
\begin{theorem}[{\cite[Theorem 6]{Jag14}}]
\label{thm:stable_embedding}
Let $\M = (M, <)$ be a minimal structure in a pure order language with $L(M)$ infinite, and consider its substructure $\LM = (L(M), <)$. Then for all $n \in \N$:
\begin{enumerate}
\item Any subset of $L(M)^n$ definable in $\LM$ is a trace of a subset of $M^n$ definable in $\M$.
\item For any subset $X \subseteq M^n$ definable in $\M$, its trace $X \cap L(M)^n$ is definable in $\LM$.
\end{enumerate}
In particular, $\LM$ is minimal and stably embedded in $\M$.
\end{theorem}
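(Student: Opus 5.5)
The plan is to prove both items at once by induction on formulas. The key is to understand how an $\M$-definable set looks when all of its free variables are restricted to $L(M)$. Item (1) is the easy direction. It suffices to show that for every formula $\varphi(\bar x)$ in the language of $\LM$ there is a formula $\varphi^*(\bar x)$ of $\M$ with $\varphi(\LM) = \varphi^*(\M) \cap L(M)^n$. Atomic formulas translate verbatim. For quantifiers, one has to replace $\exists y \in L(M)$ by something $\M$-definable. I will do this separately according to the type of $(M,<)$.

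\emph{The $\TypeOm$ case.} Here $F := M \setminus L(M)$ is finite. Hence $L(M)$ is definable in $\M$ with the parameters $F$, and relativizing quantifiers to $L(M)$ gives (1) immediately. For (2), take an $\M$-formula $\psi(\bar x)$ with parameters, and add $F$ to the parameters. A quantifier $\exists y$ over $M$ becomes $\exists y \in L(M)$ together with a finite disjunction over $y = f$ for $f \in F$. After substituting the elements of $F$, every atomic formula is of one of three kinds:
\begin{itemize}
\item $x<y$ with $x,y$ ranging in $L(M)$, which is already in the language of $\LM$;
\item a unary condition $x<f$ or $f<x$;
\item a sentence $f<f'$, which is a truth value.
\end{itemize}
Each set $\{x \in L(M) : x<f\}$ is the trace of an $\M$-definable set, so it is finite or cofinite in $L(M)$, by minimality of $\M$. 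It is therefore $\LM$-definable with parameters from $L(M)$. Substituting these definitions yields an $\LM$-formula defining the trace. The same computation shows that $\LM$ is minimal.

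\emph{The $\TypeOmO$ case.} This is the main obstacle. Here $I(M)$ is finite and can be treated like $F$ above, but $U(M)$ is infinite, and $L(M)$ need not be $\M$-definable. I will therefore aim at a separation lemma: there is a single parameter $c \in U(M)$ such that $L(M)$ and $\Low_<(c)$ differ by a finite set. To prove it, I would first show that for almost all pairs $a \in L(M)$, $b \in U(M)$ we have $a<b$. This should follow by combining the cofiniteness of $\Up(a)$ and $\Low(b)$ with minimality applied to the definable sets $\{x : x<b\}$. Next I would show that $\Low_<(c) \cap U(M)$ is finite for a suitable $c$. For this I would use Fact~\ref{fact:chains_type} together with the absence of descending chains among the elements of $L(M)$ above any fixed point; the analogue of the normalization argument should rule out an infinite part of $U(M)$ lying below every $c$.

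Once the separation lemma is available, $L(M)$ is $\M$-definable up to finitely many parameters, so (1) follows by relativization as in the $\TypeOm$ case. For (2), quantifiers over $M$ split into a quantifier over $L(M)$, a finite disjunction, and a quantifier over $U(M)$. To eliminate the quantifier over $U(M)$ I would use an automorphism/indiscernibility argument. Because of the separation, whether $\varphi(\bar a, y)$ holds for $\bar a \in L(M)^n$ and $y \in U(M)$ should depend only on finitely many $\LM$-definable conditions on $\bar a$: pass to the elementary extension and use the uniqueness of the nonalgebraic type, so that all but finitely many $y \in U(M)$ realize the same situation over $\bar a$. This last uniformity claim is the delicate point, and I expect to spend most of the effort there.
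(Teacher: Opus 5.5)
The paper quotes this theorem from \cite{Jag14} and gives no proof of its own, so I am assessing your argument on its own merits. Your $\TypeOm$ case is correct: there $L(M)$ is cofinite, so relativizing quantifiers, together with the finite/cofinite analysis of the sets $\{x\in L(M): x<f\}$, proves (1), (2) and minimality. The $\TypeOmO$ case, however, is the only nontrivial one, and it rests on a separation lemma that is false. For $c\in U(M)$ the set $\Low_<(c)$ is cofinite by the very definition of $U(M)$. It therefore contains all but finitely many points of the infinite set $U(M)$, so ``$\Low_<(c)\cap U(M)$ finite'' fails for every such $c$. No repair is possible. By minimality every $\M$-definable set is finite or cofinite, whereas $L(M)$ is infinite and coinfinite. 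Hence $L(M)$ is not $\M$-definable even modulo finite sets, and this is exactly what makes the theorem nontrivial.

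Consequently, (1) cannot be obtained by relativization, and in (2) there is no definable way to split $\exists y$ into an $L(M)$-part and a $U(M)$-part. Even the purely semantic split would not close your induction. Once $\exists y\in U(M)$ is peeled off, the matrix has a free variable ranging over $U(M)$, and an inductive hypothesis about traces on $L(M)^n$ says nothing about such formulas. Finally, your ``uniformity claim'' does not follow from uniqueness of the nonalgebraic type. That uniqueness only tells you each fibre $\varphi(\bar a,M)$ is finite or cofinite. When the fibre is finite, it says nothing about which of its points lie in $U(M)$, nor that this depends $\LM$-definably on $\bar a$. That claim is essentially the theorem itself.

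The usable fact behind your first sub-step is stronger than ``almost all pairs'': $a<b$ holds for \emph{every} $a\in L(M)$ and $b\in U(M)$. Indeed $\Up_<(a)\cap\Low_<(b)$ is cofinite, hence nonempty, and $<$ is transitive. So, away from the finite set $I(M)$ (each of whose points is comparable with only finitely many elements), $M$ is the ordinal sum of $L(M)$ and $U(M)$. A proof should exploit this without ever defining $L(M)$. Here is one way to do it.

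Work in a saturated elementary extension of $(M,<,L(M))$, with the finitely many exceptional points named. There an automorphism of the lower part and an automorphism of the upper part glue, via the identity on $I(M)$, to an automorphism of the whole order. By compactness, for every $\M$-definable $X$ the set $X\cap(L(M)^n\times U(M)^m)$ is a finite union of rectangles $\alpha_i(L(M))\times\beta_i(U(M))$, with each $\alpha_i$ definable in $\LM$.

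The case $m=0$ is (2). For (1), induct on $\LM$-formulas. By the case $m=1$, the fibres $X(\bar a,M)\cap U(M)$ for $\bar a\in L(M)^n$ take only finitely many values. Let $B$ be the union of the finite ones. Then on $L(M)^n$ the condition $\exists y\in L(M)\,X(\bar x,y)$ agrees with the $\M$-formula $\exists y\,(X(\bar x,y)\wedge y\notin B\cup I(M))$. If the fibre is finite, removing $B\cup I(M)$ leaves exactly the part of $X(\bar a,M)$ inside $L(M)$. If the fibre is infinite, $X(\bar a,M)$ is cofinite and both sides hold.
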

Clearly we have $L(L(M)) = L(M)$, i.e., $(L(M), <)$ (where $<$ is restricted to $L(M)$) is of $\TypeOm$. The next proposition does not require minimality and is a straightforward generalization of \cite[Proposition 13]{Jag14}:
\begin{proposition}
\label{prop:n_to_one}
Let $M$ be an arbitrary structure that interprets an infinite linear order in some dimension $n > 0$. Then $M$ interprets an infinite linear order in dimension $1$. Moreover, if the original interpreted order is $(\omega,<)$, the order interpreted in dimension $1$ can also be chosen to be $(\omega,<)$.
\end{proposition}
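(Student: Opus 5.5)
The plan is to induct on the dimension $n$, reducing $n$ to $n-1$ by fixing one coordinate as a parameter. Definable sets may use parameters, so a parameter fiber is as good as the original. Throughout I use the preorder formulation of Fact \ref{fact:order_preorder}, which does not need minimality. Interpreting an infinite linear order in dimension $n$ means there is a definable $X \subseteq M^n$ with a definable linear preorder $P$ on $X$ such that $A := X/\sim_P$ is infinite. The goal is a definable linear preorder on $M$ (or on a definable subset of $M$) with infinitely many classes. Such a preorder on a definable $Y \subseteq M$ extends to all of $M$ by putting $M \setminus Y$ into one bottom class. For the $(\omega,<)$ clause, every infinite subset of $\omega$ has order type $\omega$. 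So whenever the new order is obtained by embedding its classes into $A$ (or into $A$ with a fixed finite number of extra points), it is again isomorphic to $(\omega,<)$. Hence I will always try to produce the dimension-$1$ order as an order-preserving definable image inside $A$ or inside a finite lexicographic power of $A$.

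For the inductive step, write $t = (t_1,\dots,t_n)$. For $a \in M$ let $X^{(i)}_a := \{t \in X : t_i = a\}$ be the $i$-th coordinate fiber.

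\emph{Case 1.} For some $i$ and $a$, the fiber $X^{(i)}_a$ meets infinitely many $\sim_P$-classes. Then $P$ restricted to $X^{(i)}_a$, viewed as a subset of $M^{n-1}$, is an $a$-definable linear preorder of dimension $n-1$ with infinitely many classes, and its quotient embeds into $A$. The induction hypothesis finishes this case.

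\emph{Case 2.} Every fiber meets only finitely many classes. Then each $a \in M$ carries the finite nonempty set of classes met by $\bigcup_i X^{(i)}_a$. I would then order $M$ colexicographically by the finite subsets of $A$ attached to the elements of $M$. Concretely, define
\[ a \preceq b \iff \text{the $P$-maximal class met by } X_a \text{ is } P\text{-below that of } X_b, \]
breaking ties by the next largest class, and so on. A uniform bound is needed here, say at most $k$ classes per fiber, which I would try to obtain by compactness or saturation, or by working in a sufficiently saturated model in which the interpretation persists. Comparing the sorted $k$-tuples of classes colexicographically, from the largest entry down, is first-order expressible in $P$ and gives a linear preorder on $M$. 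This is where the Aristotle suggestion of a colexicographic order enters.

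The key claim in Case 2 is that this preorder has infinitely many classes. Every tuple $t$ lies in the fiber over each of its coordinates, so every class of $A$ appears in the finite set attached to some element of $M$. Infinitely many classes of $A$ therefore force infinitely many distinct finite sets, and the colex order separates distinct finite subsets of a linear order. For the $\omega$ clause, the colex order on finite subsets of $\omega$ of bounded size restricted to any infinite family is well-founded with every proper initial segment finite. Indeed, only finitely many subsets of size at most $k$ have maximum below a given bound. So the resulting infinite linear order is again $(\omega,<)$.

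The step I expect to be the main obstacle is making Case 2 genuinely first-order. In an arbitrary structure, ``meets finitely many classes'' is not definable and no uniform bound need exist in $M$ itself. The split into the two cases must then be done more carefully, for example by passing to an elementary extension, or by replacing the fiber condition with a definable one. One such condition is whether the fiber is $P$-bounded above by a single class, with the colex comparison applied to suprema of initial segments rather than to finite sets. If the uniform bound fails, I would first try to show that some fiber then contains infinitely many classes in an elementary extension, which would return us to Case 1.
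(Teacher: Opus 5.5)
Your overall plan is the paper's: induct on $n$, restrict to a coordinate fiber when one meets infinitely many classes (Case 1, which is fine), and otherwise order $M$ colexicographically by the finite sets of classes attached to its elements. The gap is Case 2. As written, it depends on a uniform bound $k$ on the number of classes per fiber, and no such bound need exist. Take $M=(\omega,<)$ and the preorder on $M^2$ given by $\min(a,b)\le\min(c,d)$. The fiber over $m$ meets exactly $m+1$ classes, so you are in Case 2 with no bound.

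Your proposed remedies do not close this gap. In an elementary extension a nonstandard fiber does meet infinitely many classes, but the resulting order uses a parameter outside $M$. Since ``has infinitely many classes'' is not first-order, nothing transfers back. In the example, the same formula with any standard parameter has only finitely many classes. The alternative via suprema of initial segments is not developed.

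The missing observation is that you need neither a bound nor a definable case split. The case distinction is made externally, about the fixed structure $M$; only the relation you finally write down has to be definable. Let $C(m)$ be the set of classes met by the fiber over $m$; the paper uses only the first coordinate, which suffices since every class has a representative with some first coordinate. Membership $t/{\sim_P}\in C(m)$ is definable by $\exists \bar y\,(m\bar y\sim_P t)$. The colex comparison can then be written via the symmetric difference: $C(a)<_{\text{colex}}C(b)$ iff $C(a)\neq C(b)$ and every class in $C(a)\setminus C(b)$ lies strictly below some class in $C(b)\setminus C(a)$. This is one first-order formula in $a,b$, whatever the sizes of the sets.

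Finiteness of every $C(m)$, which holds in $M$ in Case 2, is used only to check that this definable relation is a strict linear order on the image. For finite $X\neq Y$ it says $\max(X\triangle Y)\in Y$, and incomparability is just equality of the attached sets. That is exactly the paper's argument. The same point removes the size bound from your $(\omega,<)$ clause: $(\Fin(\omega),<_{\text{colex}})$ already has order type $\omega$ via $X\mapsto\sum_{k\in X}2^k$, so any infinite suborder has type $\omega$.
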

\begin{proof}
We proceed by induction on $n$. Suppose $M$ interprets an infinite linear order in dimension $n$ with $n > 1$. Assume without loss of generality that the domain of interpretation is the whole $M^n$. Let $<$ be a definable strict order on $M^n$ such that $E := \parallel_{<}$ is an equivalence relation with infinitely many classes and let $A = (M^n/E, </E)$ be the quotient linear order. For $m \in M$ define
\[C(m) := \{(m, m_2, \ldots, m_n)/E : m_2, \ldots, m_n \in M\}.\]
Then $C(m)$ is a nonempty chain in $A$. If for some $m \in M$ the set $C(m)$ is infinite, we define the relation $R$ on $M^{n-1}$ by $R(\bar m_1, \bar m_2) \iff m \bar m_1 < m \bar m_2$. Then $R$ is an order witnessing that $M$ interprets an infinite linear order in dimension $n-1$.

Otherwise, if for each $m \in M$ the set $C(m)$ is finite, let $<_{\text{colex}}$ be the colexicographical order on $\Fin(A)$, the family of finite subsets of $A$:
\[X <_{\text{colex}} Y \iff X \neq Y \text{ and } \forall x \in X \setminus Y ~ \exists y \in Y \setminus X ~x < y.\]
Then $<_{\text{colex}}$ is a linear order on $\Fin(A)$ and on $B := \{C(m) : m \in M\} \subseteq \Fin(A)$. $B$ is infinite since $\bigcup B = A$. Let $S$ be the pullback of $<_{\text{colex}}$ by the map $M \ni m \mapsto C(m) \in B$. Then $S$ is clearly a definable strict order on $M$ which interprets $B$.
For the ``moreover'' part, every infinite suborder of $\omega$ has order type $\omega$, and $(\Fin(\omega),<_{\text{colex}})$ has order type $\omega$ via $X\mapsto\sum_{k \in X}2^k$. Hence both branches of the induction preserve order type $\omega$.
\end{proof}
\begin{proposition}
\label{prop:interpretation_M_LM}
Let $\M = (M, <, \ldots)$ be a minimal relational structure with $<$ of $\TypeOmO$ and $\LM = (L(M), <, \ldots)$ its substructure. If $\LM$ is minimal and interprets an infinite linear order, then $\M$ interprets an infinite linear order.
\end{proposition}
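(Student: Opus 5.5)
The plan is to transfer the order that interprets $\omega$ in $\LM$ up to $\M$ through the pure order reduct. Then I build a second, reversed copy on the upper part and concatenate the two into an order interpreting a linear order of type $\omega + \omega^*$.

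\emph{Step 1: reduce to a pure-order definable relation.} By Lemma \ref{lem:limits} and $L(L(M)) = L(M)$, the structure $\LM$ is of $\TypeOm$, so it interprets $(\omega,<)$ via some definable strict order $R_0$. Corollary \ref{cor:order_interprets_order}, applied inside the minimal structure $\LM$, shows that $R_0$ is definable in the reduct $(L(M),<)$. The reduct $(M,<)$ of $\M$ is still minimal and of $\TypeOmO$, so $L(M)$ is infinite. Theorem \ref{thm:stable_embedding}(1) then gives a $<$-definable $X \subseteq M^2$ whose trace on $L(M)^2$ is $R_0$.

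\emph{Step 2: localize $X$ in $M$.} Since $L(M)$ need not be definable, I replace it with a definable set on which $X$ behaves like an $\omega$-interpreting order. Let $D$ be the set of $x$ such that $X$ restricted to $\Up_<(x)$ is a strict order whose incomparability classes are finite antichains, and whose levels are linearly ordered in type $\omega$ within $\Up_<(x)\cap\Low_<(y)$ for cofinitely many $y$. This is expressed by first-order conditions on $X$, to be chosen carefully. Using Theorem \ref{thm:stable_embedding}(2), I will check that $D$ contains almost all of $L(M)$. I then define an order $S_L$ that agrees with $X$ on a cofinite part of $L(M)$ and is only guaranteed up to almost equality.

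\emph{Step 3: order the upper part.} For $u \in U(M)$, the set $L(M)\setminus \Low_<(u)$ is finite. Let $c(u)$ be the finite set of $R_0$-levels that meet $\{x \in D : \lnot (x<u)\}$; by Step 2 this set is definable from $u$. Order $U(M)$ by the reverse of the pullback of $<_{\text{colex}}$ along $c$, as in the proof of Proposition \ref{prop:n_to_one}. This yields a definable strict order whose incomparability is an equivalence relation, and its quotient is a suborder of $(\Fin(\omega),<_{\text{colex}})^*$, which has type $\omega^*$ when infinite. Finally, put every element of the $L$-side before every element of the $U$-side. The resulting order $S$ interprets an infinite linear order as soon as the $U$-side quotient is infinite, or even if only the $L$-side is. The $L$-side is always infinite, so the main remaining task is consistency with definability.

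\emph{Step 4 (main obstacle).} The delicate point is splitting $M$ definably into an ``$L$-side'' and a ``$U$-side'', because $L(M)$ and $U(M)$ are not definable. I would try the formula ``$\Up_<(x)\cap D$ is infinite and $x \in D$'' (witnessed by the $\omega$-structure of $X$ on $\Up_<(x)$) as a definable proxy for $L(M)$. I expect it to differ from $L(M)$ by only finitely many points, using the $\TypeOmO$ picture: for $u\in U(M)$, $\Up_<(u)$ is finite. Such finite discrepancies are harmless, since they can be absorbed by a normalization-style finite rearrangement. If the map $c$ has only finitely many values on $U(M)$, the $U$-side simply collapses into finitely many classes. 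The order $S$ still interprets an infinite linear order via its $L$-side, as long as the finite $U$-classes are antichains, which is guaranteed by placing them as a single block.
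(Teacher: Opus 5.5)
\textbf{There is a genuine gap.} Your Step 1 matches the paper's opening: Corollary \ref{cor:order_interprets_order} makes $R_0$ definable in $(L(M),<)$, and Theorem \ref{thm:stable_embedding} lifts it to a $<$-definable $X$. The rest of the plan cannot work.

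\emph{The $L$/$U$ split is impossible.} Steps 3--4 rest on splitting $M$ definably, up to finitely many points, into an $L$-side and a $U$-side. In a minimal structure every definable subset of $M$ is finite or cofinite. For $<$ of $\TypeOmO$, both $L(M)$ and $U(M)$ are infinite. So no definable set differs from $L(M)$ by only finitely many points, and your proxy in Step 4 cannot do what you want. (It also uses ``$\Up_<(x)\cap D$ is infinite'', which is not first-order.) The colex order on the $U$-side and the concatenation therefore never yield a definable order. Note also that the $R_0$-levels you assign to elements of $U(M)$ in Step 3 are undefined, since $R_0$ lives only on $L(M)$.

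\emph{Step 2 looks in the wrong direction.} For $x\in L(M)$, the cone $\Up_<(x)$ is cofinite and contains almost all of $U(M)$, where you have no control over $X$. So nothing forces $x\in D$. Theorem \ref{thm:stable_embedding}(2) only tells you that $D\cap L(M)$ is finite or cofinite in $L(M)$, not which one. In addition, ``finite antichains'', ``type $\omega$'' and ``for cofinitely many $y$'' are not first-order as stated.

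\emph{The missing idea} is to use lower cones and let minimality act as an overspill principle, so you never handle $U(M)$ by hand.
\begin{enumerate}[(i)]
\item $L(M)$ is downward closed: if $y<x\in L(M)$ then $\Up_<(y)\supseteq\Up_<(x)$ is cofinite. Hence for $x\in L(M)$ we have $\Low_<(x)\subseteq L(M)$, and there $X$ agrees with $R_0$.
\item Let $\phi(x)$ be the first-order formula ``$X$ restricted to $\Low_<(x)$ is a strict order and $\parallel_X$ restricted to $\Low_<(x)$ is transitive''. By (i), $\phi$ holds on all of $L(M)$.
\item So $\phi(M)$ is infinite, hence cofinite. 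Thus $\phi(a)$ holds for some $a\in U(M)$, and $\Low_<(a)$ is cofinite.
\item On $\Low_<(a)$, $X$ is a strict order whose incomparability is an equivalence relation. It has infinitely many classes, because $\Low_<(a)$ contains a cofinite part of $L(M)$, on which $X=R_0$.
\item Append the finite set $M\setminus\Low_<(a)$ as a single block. The result is a definable order interpreting an infinite linear order.
\end{enumerate}
The definable extension $X$ behaves correctly on the $U(M)$-part of $\Low_<(a)$ automatically. No separation of the two sides and no separate order on $U(M)$ is needed.
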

\begin{proof}
Let $\M'$ and $\LM'$ be reducts of $\M$ and $\LM$ respectively to the pure language of orders. Assume that $\LM$ interprets an infinite linear order via a definable strict partial order $R$ on $L(M)$. Then $R$ is definable in $\LM'$ and by Theorem \ref{thm:stable_embedding}, there exists in $\M'$ an $<$-definable relation $S$ whose trace on $L(M)$ agrees with $R$. Let $\phi(x)$ say ``$S$ restricted to $\Low_<(x)$ is a strict order and $\parallel_{S}$ restricted to $\Low_<(x)$ is an equivalence relation''. Then $\phi(x)$ is a definable property that is true in $\M$ for all $x \in L(M)$. By minimality, there is some $a \in U(M)$ so that $\M \models \phi(a)$. Since $\Low_<(a)$ is cofinite, the conclusion follows.
\end{proof}

We now prove the equivalence between natural variants of the main question:
\begin{theorem}
\label{thm:question_variants}
The following are equivalent:
\begin{enumerate}[(1)]
\item Every minimal ordered structure with infinite chains interprets an infinite linear order in some dimension.
\item Every minimal ordered structure with infinite chains interprets an infinite linear order in dimension $1$.
\item Every minimal ordered structure of $\TypeOm$ interprets $(\omega, <)$ in some dimension.
\item Every minimal ordered structure of $\TypeOm$ interprets $(\omega, <)$ in dimension $1$.
\end{enumerate}
\end{theorem}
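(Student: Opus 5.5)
The plan is to prove the cycle of implications $(2)\Rightarrow(1)$, $(1)\Rightarrow(3)$, $(3)\Rightarrow(4)$, $(4)\Rightarrow(2)$.

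\textbf{The easy implications.} The implication $(2)\Rightarrow(1)$ is trivial, since dimension $1$ is a special case.

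For $(1)\Rightarrow(3)$, take a minimal $\TypeOm$ structure $M$. By (1) it interprets some infinite linear order $(A,<')$ in some dimension $n$. Proposition \ref{prop:n_to_one} then gives an infinite linear order interpreted in dimension $1$, and it is minimal. By Lemma \ref{lem:limits}(i), since $M$ is $\TypeOm$, that order must be $(\omega,<)$ rather than $(\omega+\omega^*,<)$. Strictly, Lemma \ref{lem:limits} is stated with the hypothesis that $M$ interprets some infinite order, which we now have. So (3) holds.

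For $(3)\Rightarrow(4)$, apply the ``moreover'' part of Proposition \ref{prop:n_to_one}: an interpretation of $(\omega,<)$ in dimension $n$ yields one in dimension $1$.

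\textbf{The main step $(4)\Rightarrow(2)$.} Let $(M,<,\ldots)$ be minimal with infinite chains. By Tanović's classification it is $\TypeOm$, $\TypemO$ or $\TypeOmO$; it is not $\TypeZero$, since that type has no infinite chains.
\begin{itemize}
\item If $M$ is $\TypeOm$, (4) applies directly.
\item If $M$ is $\TypemO$, reverse the order to get a $\TypeOm$ structure with the same definable sets, and apply (4).
\item In the $\TypeOmO$ case, the set $L(M)$ is infinite, and we want to pass to the substructure $\LM=(L(M),<,\ldots)$ and use Proposition \ref{prop:interpretation_M_LM}. Two obstacles arise: that proposition needs $\LM$ minimal, and it needs $\M$ relational.
\end{itemize}

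\textbf{Handling the $\TypeOmO$ case.} To make $\M$ relational, replace functions by their graphs; this preserves definable sets. For minimality of $\LM$ in the full language, I would not rely on $\LM$ directly. Instead I would work in the reduct to pure order, $\M'=(M,<)$, which is still minimal because it is a reduct. Theorem \ref{thm:stable_embedding} then gives that $\LM'=(L(M),<)$ is minimal, and it is clearly $\TypeOm$. So (4) yields an order $R$ on $L(M)$, definable in $\LM'$, interpreting $\omega$. Applying Proposition \ref{prop:interpretation_M_LM} to the pure order structure $\M'$ (with $\LM'$ minimal), we get that $\M'$ interprets an infinite linear order, hence so does the expansion $\M$.

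The delicate point is verifying the hypotheses in exactly this order: reduce to the pure order language first, so that Theorem \ref{thm:stable_embedding} supplies both the minimality of $\LM'$ and the transfer of $R$. Everything else is bookkeeping with the earlier results.
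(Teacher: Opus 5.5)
Your proposal is correct and follows the paper's proof. You use Proposition \ref{prop:n_to_one} (with its ``moreover'' clause) for the dimension reductions and Lemma \ref{lem:limits} to pin down $(\omega,<)$ for $\TypeOm$ structures. You handle the $\TypeOmO$ case exactly as the paper does: pass to the pure-order reducts, get minimality of $(L(M),<)$ from Theorem \ref{thm:stable_embedding}, then apply Proposition \ref{prop:interpretation_M_LM}.

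The only differences are cosmetic. You arrange the implications as one cycle $(2)\Rightarrow(1)\Rightarrow(3)\Rightarrow(4)\Rightarrow(2)$, and you rightly note that the $\TypeOmO$ argument already gives dimension $1$. Your step of replacing functions by their graphs is unnecessary once you pass to the pure-order reduct.
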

\begin{proof}
Proposition \ref{prop:n_to_one} establishes equivalence between (1) and (2) and between (3) and (4). Item (2) implies (4) using Lemma \ref{lem:limits}. We show (4) $\Rightarrow$ (1). Suppose $\M = (M, <, \ldots)$ is a minimal ordered structure with infinite chains. If $\M$ is not $\TypeOmO$, item (4) immediately applies (possibly after reversing the order). If $\M$ is of $\TypeOmO$, pass to the pure-order reducts $\M' = (M, <)$ and $\LM' = (L(M), <)$. Then $\M'$ is minimal and, by Theorem \ref{thm:stable_embedding}, so is $\LM'$. By assumption $\LM'$ interprets $(\omega,<)$, and Proposition \ref{prop:interpretation_M_LM} can be applied: $\M'$ (and therefore $\M$) interprets an infinite linear order. 
\end{proof}

\bibliographystyle{plain}
\bibliography{minimal2}

\end{document}